\newcommand{\spa}{\medskip}
\newcommand{\F}{\mathbb{F}}
\newcommand{\Fp}{\mathbb{F}_p}
\newcommand{\Z}{\mathbb{Z}}
\newcommand{\kbar}{{\bar{k}}}
\newcommand{\Zp}{{\mathbb{Z}_{p}}}
\newcommand{\Qp}{{\mathbb{Q}_{p}}}
\newcommand{\Gm}{\mathbb{G}_m}
\newcommand{\sym}{{\mathrm{sym}}}
\newcommand{\calA}{\mathcal{A}}
\newcommand{\calE}{\mathcal{E}}
\newcommand{\calF}{\mathcal{F}}
\newcommand{\calG}{\mathcal{G}}
\newcommand{\calH}{\mathcal{H}}
\newcommand{\calL}{\mathcal{L}}
\newcommand{\calO}{\mathcal{O}}
\newcommand{\calP}{\mathcal{P}}
\newcommand{\calM}{\mathcal{M}}
\newcommand{\calN}{\mathcal{N}}
\newcommand{\calY}{\mathcal{Y}}
\newcommand{\Fcrys}{\mathbf{F\textrm{-}Crys}}
\newcommand{\myim}{\mathrm{im}}
\DeclareMathOperator{\Spec}{Spec}
\DeclareMathOperator{\Hom}{Hom}
\DeclareMathOperator{\End}{End}
\DeclareMathOperator{\Aut}{Aut}
\DeclareMathOperator{\id}{id}
\DeclareMathOperator{\rk}{rk}
\newcommand{\HHom}{\mathcal{H}om}
\newcommand{\et}{\mathrm{\acute{e}t}}
\newcommand{\crys}{{\mathrm{crys}}}
\newcommand{\Pic}{\mathrm{Pic}}
\newcommand{\Br}{\mathrm{Br}}
\newcommand{\NS}{\mathrm{NS}}
\newcommand{\fppf}{\mathrm{fppf}}
\newcommand{\iso}{\xrightarrow{\sim}}
\newcommand{\Mor}{\mathrm{Mor}}
\newcommand{\ks}{{k_s}}
\newcommand{\ki}{{k_i}}
\newcommand{\mupn}{{\mu_{p^n}}}
\newcommand{\Tp}{\mathrm{T}_p}
\newcommand{\Vp}{\mathrm{V}_p}
\newcommand{\Tl}{\mathrm{T_\ell}}
\begin{document}

\newtheorem{theo}[subsection]{Theorem}
\newtheorem*{theo*}{Theorem}
\newtheorem{conj}[subsection]{Conjecture}

\newtheorem{prop}[subsection]{Proposition}
\newtheorem{lemm}[subsection]{Lemma}
\newtheorem*{lemm*}{Lemma}
\newtheorem{coro}[subsection]{Corollary}

\theoremstyle{definition}
\newtheorem{defi}[subsection]{Definition}
\newtheorem*{defi*}{Definition}
\newtheorem{hypo}[subsection]{Hypothesis}
\newtheorem{rema}[subsection]{Remark}
\newtheorem{exam}[subsection]{Example}
\newtheorem{nota}[subsection]{Notation}
\newtheorem{cons}[subsection]{Construction}
\numberwithin{equation}{subsection}

\setcounter{tocdepth}{1}

\title{Boundedness of the $p$-primary torsion of the Brauer group of an abelian variety}
\date{\today}
\makeatletter
	\@namedef{subjclassname@2020}{%
	\textup{2020} Mathematics Subject Classification}
\makeatother
\subjclass[2020]{14F22, 19E15, 14F42, 14C25, 14F30}

\keywords{Brauer group, abelian variety, fppf cohomology, Tate conjecture}
\author{Marco D'Addezio}

\address{Institut de Mathématiques de Jussieu-Paris Rive Gauche, SU - 4 place Jussieu, Case 247, 75005 Paris}
\email{daddezio@imj-prg.fr}
\begin{abstract}
	We prove that the $p^\infty$-torsion of the transcendental Brauer group of an abelian variety over a finitely generated field of characteristic $p>0$ is bounded. This answers a (variant of a) question asked by Skorobogatov and Zarhin for abelian varieties. To do this, we prove a “flat Tate conjecture” for divisors. In the text, we also study other geometric Galois-invariant $p^\infty$-torsion classes of the Brauer group which are not in the transcendental Brauer group. These classes, in contrast with our main theorem, can be infinitely $p$-divisible. We explain how the existence of these $p$-divisible towers is naturally related to the failure of surjectivity of specialisation morphisms of Néron--Severi groups in characteristic $p$.
\end{abstract}
\maketitle

	\tableofcontents
\section{Introduction}

In this article we want to study problems related to the finiteness of the $p$-primary torsion of the Brauer group of abelian varieties in positive characteristic $p$. If $k$ is a finite field and $A$ is an abelian variety over $k$, it is well-known that the Brauer group of $A$, defined as $\Br(A):=H^2_\et(A,\Gm)$, is a finite group, \cite[Prop. 4.3]{Tat94}. The main input for this result is the Tate conjecture for divisors, proved by Tate in \cite{Tat66}. If $k$ is replaced by a finitely generated field extension of $\Fp$ one can not expect anymore $\Br(A)$ to be finite (see \cite[§1]{SZ08}). On the other hand, if $\Br(A_{\ks})^{k}$ is the \textit{transcendental Brauer group} of $A$, namely the image of $\Br(A)\to \Br(A_{\ks})$ where $\ks$ is a separable closure of $k$, the group $\Br(A_{\ks})^{k}[\tfrac{1}{p}]$ is finite by \cite[Thm. 16.2.3]{CS21}. In \cite[Ques. 1]{SZ08}, Skorobogatov and Zarhin asked whether the $p$-primary torsion of $\Br(A_{\ks})^{k}$ is finite as well. This question has a negative answer already for abelian surfaces, as we show in Proposition \ref{counterexample:p}. Nonetheless, we prove the following alternative finiteness result. Write $\kbar$ for an algebraic closure of $k_s$.

\begin{theo}[Theorem \ref{fini-Brau:t}]\label{i-main:t}Let $A$ be an abelian variety over a finitely generated field $k$ of characteristic $p>0$. The transcendental Brauer group $\Br(A_{\ks})^{k}$ is a direct sum of a finite group and a finite exponent $p$-group. In addition, if the Witt vector cohomology group $H^2(A_{\kbar},W\calO_{A_\kbar})$ is a finite $W(\kbar)$-module, then $\Br(A_{\ks})^{k}$ is finite.
\end{theo}

The condition on $H^2(A_{\kbar},W\calO_{A_\kbar})$ is necessary to remove the “supersingular pathologies” as the one of our counterexample and it is satisfied, for example, when the $p$-rank of $A$ is $g$ or $g-1$, where $g$ is the dimension of $A$ (see \cite[Cor. 6.3.16]{Ill83}). Note that if the \textit{formal Brauer group} of $A_\kbar$, denoted by $\hat{\Br}(A_\kbar)$, is a formal Lie group, then by \cite[Cor. II.4.4]{AM77} the cohomology group $H^2(A_{\kbar},W\calO_{A_\kbar})$ is a finite  $W(\kbar)$-module if and only if $\hat{\Br}(A_\kbar)$ has finite height. Note also that the formal Brauer group of abelian surfaces is always a formal Lie group by [\textit{ibid.}, Cor. II.2.12]. As a consequence of Theorem \ref{i-main:t}, we deduce that the subgroup of Galois-fixed points of $\Br(A_{\ks})$, denoted by $\Br(A_{\ks})^{\Gamma_k}$, has finite exponent (Corollary \ref{galo-fixe:c}). This is a variant of \cite[Ques. 2]{SZ08} for abelian varieties. 

\spa

In this article, we also study the Galois-fixed points of $\Br(A_{\kbar})$. Ulmer in \cite[§7.3.1]{Ulm14} conjectured that $\Tp(\Br(A_\kbar))^{\Gamma_k}=0$ where $\Tp(\Br(A_\kbar))$ is the $p$-adic Tate module of $\Br(A_{\kbar})$. Even in this case, we provide a counterexample to this conjecture. We use the following result.

\begin{prop}[Proposition \ref{counter:p}]\label{i-counter:p}
	Let $B$ be an abelian variety over a finitely generated field $k$ of characteristic $p>0$. Write $A$ for $B\times_k B$ and $\Tp(\Br(A_\kbar))$ for the $p$-adic Tate module of $\Br(A_\kbar)$. There is a natural exact sequence		$$0\to \Hom(B,B^\vee)_\Zp\to \Hom(B_\kbar[p^\infty],B_\kbar^\vee[p^\infty])^{\Gamma_k}\to \Tp(\Br(A_\kbar))^{\Gamma_k},$$
	
	where $\Hom(B,B^\vee)$ denotes the group of homomorphisms $B\to B^\vee$ as abelian varieties over $k$.

\end{prop}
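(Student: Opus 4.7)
The plan is to derive the sequence from the flat Kummer sequence on $A_\kbar = B_\kbar \times B_\kbar$ by extracting the ``cross'' Künneth summand. First, from $0 \to \mu_{p^n} \to \Gm \to \Gm \to 0$ the associated long exact sequence in flat cohomology, combined with the $p$-divisibility of $\Pic^0(A_\kbar) = A_\kbar^\vee(\kbar)$, gives after taking $\varprojlim_n$ (exact because $\NS(A_\kbar)$ is finitely generated) a short exact sequence of $\Gamma_k$-modules
\[
0 \to \NS(A_\kbar) \otimes \Zp \to H^2_\fppf(A_\kbar, \Zp(1)) \to \Tp(\Br(A_\kbar)) \to 0.
\]

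I then specialise to $A = B \times B$. The seesaw principle gives the standard decomposition $\NS(A_\kbar) = \NS(B_\kbar)^{\oplus 2} \oplus \Hom_\kbar(B, B^\vee)$, the cross summand being realised by pulling back the Poincaré bundle on $B \times B^\vee$ along $\mathrm{id} \times f$. On the cohomological side, a Künneth-type decomposition of flat cohomology for abelian varieties, together with the identifications $H^1_\fppf(B_\kbar, \mu_{p^n}) = B_\kbar^\vee[p^n]$ (from the flat Kummer sequence on $B$) and its Cartier-dual counterpart on the $\Z/p^n$ side, yields a parallel decomposition of $H^2_\fppf(A_\kbar, \Zp(1))$ whose cross summand is canonically $\Hom(B_\kbar[p^\infty], B_\kbar^\vee[p^\infty])$. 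The $p$-adic first Chern class intertwines the two decompositions and, on the cross summand, reduces to the restriction-to-$p$-power-torsion map. Extracting the cross summand from the sequence above gives
\[
0 \to \Hom_\kbar(B, B^\vee) \otimes \Zp \to \Hom(B_\kbar[p^\infty], B_\kbar^\vee[p^\infty]) \to \Tp(\Br(A_\kbar)).
\]

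I finally apply the left-exact functor $(-)^{\Gamma_k}$. Since $\Hom_\kbar(B, B^\vee)^{\Gamma_k} = \Hom_k(B, B^\vee)$ is a finitely generated $\Z$-module on which the Galois action is trivial, tensoring with $\Zp$ commutes with taking invariants, which produces the desired exact sequence. The main obstacle is the Künneth-type identification in the second step: it requires a careful handling of flat cohomology of abelian varieties with $\mu_{p^n}$ coefficients in characteristic $p$, correctly incorporating both the étale and the connected (infinitesimal) parts of $B[p^\infty]$ and $B^\vee[p^\infty]$. Once that identification is in place the remainder is bookkeeping; one expects the whole argument to be expressible uniformly through the description of $H^2_\fppf(A_\kbar, \Zp(1))$ as the symmetric homomorphisms $\Hom(A_\kbar[p^\infty], A_\kbar^\vee[p^\infty])^{\sym}$, with $\NS(A_\kbar) \otimes \Zp$ sitting inside as the classes coming from the Néron--Severi group.
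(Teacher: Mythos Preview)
Your approach is correct and essentially the same as the paper's: start from the Kummer sequence $0\to \NS(A_\kbar)_{\Zp}\to H^2(A_\kbar,\Zp(1))\to \Tp(\Br(A_\kbar))\to 0$, extract the cross K\"unneth summand, and take $\Gamma_k$-invariants. The obstacle you flag---the K\"unneth identification of the cross summand of $H^2_\fppf(A_\kbar,\Zp(1))$ with $\Hom(B_\kbar[p^\infty],B_\kbar^\vee[p^\infty])$---is exactly what the paper has already established in Proposition~\ref{imag:p} via the crystalline comparison $H^2(A_\kbar,\Zp(1))\simeq H^2_\crys(A_\kbar/W)^{F=p}$ and Dieudonn\'e theory, which is precisely the uniform ``symmetric homomorphisms'' description you anticipate in your last sentence.
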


	The proposition implies, for example, that when $\End(B)=\Z$ the $\Gamma_k$-module $\Tp(\mathrm{Br}(A_{\bar{k}}))$ admits non-zero Galois-fixed points (Corollary \ref{count:c}). In this case, $\Br(A_{\kbar})^{\Gamma_k}$ has infinite exponent since $$\Tp(\Br(A_\kbar)^{\Gamma_k})=\Tp(\Br(A_\kbar))^{\Gamma_k}.$$ Note that if we replace $\Tp(\mathrm{Br}(A_{\bar{k}}))$ with the $\ell$-adic Tate module $\Tl(\mathrm{Br}(A_{\bar{k}}))$, where $\ell$ is a prime different from $p$, then $\Tl(\mathrm{Br}(A_{\ks}))=\Tl(\mathrm{Br}(A_{\bar{k}}))$ has no non-trivial Galois-fixed points. 
	
	\spa

	These “exceptional classes” in $\Tp(\Br(A_\kbar))^{\Gamma_k}$ are naturally related to specialisation morphisms of Néron--Severi groups. We recall the following theorem, which was proved in \cite[Thm. 5.2]{And96} in characteristic $0$ (see also \cite{MP12}) and in \cite{Amb18} and \cite{Chr18} in positive characteristic. 
	
	\begin{theo}[Andr\'e,  Ambrosi, Christensen]\label{i-spec:t}
Let $K$ be an algebraically closed field which is not an algebraic extension of a finite field, $X$ a finite type $K$-scheme, and $\calY\to X$ a smooth proper morphism. For every geometric point $\bar{\eta}$ of $X$ there is an $x\in X(K)$ such that $\rk_\Z(\NS(\calY_{\bar{\eta}}))=\rk_\Z(\NS(\calY_x))$\footnote{If $R$ is a domain with fraction field $K$ and $M$ is an $R$-module, we write $\rk_R(M)$ for the dimension of $M\otimes_R K$ as a $K$-vector space.}.
	\end{theo}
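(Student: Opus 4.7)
The plan is to reduce the statement to the case where $X$ is a smooth integral curve over $K$ with $\bar{\eta}$ as its generic geometric point. This reduction is essentially a Noether-normalization argument: the condition on ranks is Zariski local on $X$, and by choosing a one-parameter family through $\bar{\eta}$ inside $X$ and iterating, one reduces to the curve case. One also replaces $X$ by a dense open subscheme so that the local system $R^2 f_* \Ql(1)$ is lisse.

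On such a curve, for a prime $\ell \neq p$ the first Chern class provides an injection
\[
c_1 \colon \NS(\calY_x) \otimes \Ql \hookrightarrow H^2_\et(\calY_x, \Ql(1))^{\Gamma_x},
\]
with a parallel $p$-adic statement using $F$-isocrystals for the $p$-part of the Néron--Severi rank. The goal is to show that the locus
\[
Z := \bigl\{x \in X(K) : \rk_\Z(\NS(\calY_x)) > \rk_\Z(\NS(\calY_{\bar{\eta}}))\bigr\}
\]
is contained in a countable union of proper Zariski closed subsets of $X$ (a \emph{Noether--Lefschetz type locus}), and then to produce a $K$-point of $X \setminus Z$.

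The containment in a countable union of closed subsets is obtained by enumerating the possible extra Galois-invariant classes in $H^2_\et(\calY_x, \Ql(1))^{\Gamma_x}$: each potential jump corresponds to a closed condition cut out by the triviality of a subrepresentation of the monodromy, and since the ambient cohomology is finite-dimensional only countably many such conditions occur. The delicate positive-characteristic input, due to Ambrosi and Christensen, is to make this argument work simultaneously for the $\ell$-adic lisse sheaves and the overconvergent $F$-isocrystal $R^2 f_{\mathrm{crys}*}(\calO)$, so as to control the full Néron--Severi rank (not only its prime-to-$p$ part), for which one uses the Tate-type results for $F$-isocrystals and the structure of crystalline monodromy groups.

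The main obstacle is the final step: producing a $K$-rational point of $X$ outside a countable union of proper Zariski closed subsets. If $K$ has uncountable transcendence degree over its prime field, a cardinality argument suffices. In the remaining case, the hypothesis that $K$ is not algebraic over $\Fp$ is essential: it provides a transcendental element over which one runs a Hilbert-irreducibility type specialization, for which one invokes the results of Ambrosi on the specialization of crystalline monodromy groups in positive characteristic. This is the step where the positive-characteristic proof genuinely diverges from the Hodge-theoretic arguments of André and Maulik--Poonen, and where the heart of the work lies.
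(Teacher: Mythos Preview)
The paper does not prove this theorem. It is stated in the introduction as a result of Andr\'e, Maulik--Poonen (characteristic~$0$) and Ambrosi, Christensen (positive characteristic), with references to \cite{And96}, \cite{MP12}, \cite{Amb18}, \cite{Chr18}; the paper only \emph{recalls} it in order to motivate Theorem~\ref{i-NS:t} and to discuss its failure over $\bar\F_p$. There is therefore no proof in the paper against which your proposal can be compared.

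As a separate remark, your sketch is a reasonable high-level caricature of the strategies in the cited works, but it blurs several distinct arguments. The reduction is not a Noether-normalisation to a curve; in \cite{MP12} and \cite{Amb18} one instead reduces to $K$ being the algebraic closure of a finitely generated field and spreads everything out over an arithmetic base. The ``countable union of closed subsets plus cardinality'' argument you describe is only the easy uncountable case; the substantive content in the remaining case is not a generic Hilbert irreducibility but a comparison of monodromy groups (global versus local/Frobenius) showing that the set of closed points where the monodromy drops is sparse. In positive characteristic this requires independence-of-$\ell$ results and, for the $p$-part, control of the crystalline/overconvergent monodromy, which is where the real work in \cite{Amb18} and \cite{Chr18} lies. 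Your last paragraph gestures at this but does not isolate the actual mechanism.
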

	
	As it is well-known, the theorem is false when $K=\bar{\F}_p$ (see \cite[Rmk. 1.12]{MP12}). What we prove is that, in the known counterexamples, the elements in $\Tp(\Br(A_\kbar))^{\Gamma_k}$ explain the failure of Theorem \ref{i-spec:t}. More precisely, we prove the following result.

\begin{theo}[Theorem \ref{NS:t}]\label{i-NS:t}
	Let $X$ be a connected normal scheme of finite type over $\Fp$ with generic point $\eta=\Spec(k)$ and let $f:\calA\to X$ be an abelian scheme over $X$ with constant Newton polygon\footnote{With this we mean that for every algebraically closed field $\Omega$ and every $\bar{x}\in X(\Omega)$, the Newton polygons of the fibres $\calA_{\bar{x}}$ are all equal. Note that in this case it is enough to check $\bar{\mathbb{F}}_p$-points.}. For every closed point $x=\Spec(\kappa)$ of $X$ we have $$\rk_\Z(\NS(\calA_{\bar{x}})^{\Gamma_\kappa})-\rk_\Z(\NS(\calA_{\bar{\eta}})^{\Gamma_k})\geq \rk_\Zp ( \Tp(\Br(\calA_{\bar{\eta}}))^{\Gamma_k}).$$
\end{theo}
Note that in the inequality the left term is “motivic”, while the right term comes from some $p$-adic object which, as far as we know, has no $\ell$-adic analogue. Note also that $\Tp(\Br(\calA_{\bar{x}}))^{\Gamma_\kappa}=0$ by Corollary \ref{galo-fixe:c} since $\kappa$ is a perfect field.

\spa

To prove Theorem \ref{i-main:t} we use a flat variant of the Tate conjecture. For every $n$, let $H^2_\fppf(A_\kbar,\mupn)^k$ be the image of the extension of scalars morphism $H^2_\fppf(A,\mupn)\to H^2_\fppf(A_\kbar,\mupn)$.

\begin{theo}[Theorem \ref{fppf-tate:t}]\label{i-fppf-tate:t}
After possibly replacing $k$ with a finite separable extension, the cycle class map $$c_1:\NS(A)_{\Zp}\to\varprojlim_n H^2_\fppf(A_\kbar,\mupn)^k$$ becomes an isomorphism\footnote{For us, $\NS(A)$ is the group of $k$-points of the group scheme $\pi_0(\Pic_{A/k})$.}.
\end{theo}

We obtain this result by using the crystalline Tate conjecture for abelian varieties, proved by de Jong in \cite[Thm. 2.6]{deJ98}. The main issue that we have to overcome is the lack of a good comparison between crystalline and fppf cohomology of $\Zp(1)$ over imperfect fields. To avoid this problem, we exploit the fact that we are working with abelian varieties. In this special case, the comparison is constructed using the $p$-divisible group of $A$ (and its dual). 

\spa

The technical issue that we have to solve using the groups $H^2_{\fppf}(A_\kbar,\mupn)^k$ is that it is not clear a priori whether $H^2_{\fppf}(A,\Zp(1))\to \varprojlim_n H^2_{\fppf}(A_\kbar,\mupn)^k$ is surjective. This is done (after inverting $p$) in Proposition \ref{tricky:p}, where we reduce to the case when $A$ is the Jacobian of a curve. This idea was inspired by the proof of \cite[Thm. 2.1]{CS13}.

\subsection{Outline of the article}In §\ref{prel-resu:s} we prove some general results on the cohomology of fppf sheaves. In particular, we prove Corollary \ref{Br-inj:c}, which is a first result on the relation between the Brauer group of a scheme over $\ks$ and $\kbar$. In this section, we also prove in Proposition \ref{Kummer-k-inv:p} the exactness of some fundamental sequences for the groups $H^2_\fppf(X_\kbar,\mupn)^k$. In §\ref{cons-morp:s}, we construct a morphism which relates $H^2_\fppf(A,\Zp(1))$ with $\Hom(A[p^\infty],A^\vee[p^\infty])$ and we prove basic properties of this morphism as Proposition \ref{comm-squa:p} and Proposition \ref{imag:p}. In §\ref{main-resu:s}, we prove the flat variant of the Tate conjecture (Theorem \ref{fppf-tate:t}) and the finiteness result for the transcendental Brauer group (Theorem \ref{fini-Brau:t}). Finally, in §\ref{spec:s}, we look at the relation of our results with the theory of specialisation of Néron--Severi groups. In particular, we prove Theorem \ref{NS:t}.

\subsection{Acknowledgments} I thank Emiliano Ambrosi for the discussions we had during the writing of \cite{AD18} which inspired this article, Matthew Morrow and Kay Rülling for many enlightening conversations about the cohomology of $\Z_p(1)$, and Ofer Gabber, Luc Illusie, Peter Scholze, and Takashi Suzuki for answering some questions on the fppf site. I also thank Jean-Louis Colliot-Thélène, Bruno Kahn, Alexei Skorobogatov, and Takashi Suzuki for very useful comments on a first draft of this article. Finally, I thank the anonymous referees for their careful reading of the article and for the corrections they suggested.

	\spa

The author was funded by the Deutsche Forschungsgemeinschaft (EXC-2046/1, project ID: 390685689 and DA-2534/1-1, project ID: 461915680) and by the Max-Planck Institute for Mathematics.
\section{Notation}

If $k$ is a field, we write $\kbar$ for a fixed algebraic closure of $k$ and $\ks$ (resp. $\ki$) for the separable (resp. purely inseparable) closure of $k$ in $\kbar$. We denote by $\Gamma_k$ the absolute Galois group of $k$. If $x$ is a $k$-point of a scheme, we denote by $\bar{x}$ the induced $\kbar$-point. For an abelian group $M$, we write $\Tp(M)$ for the $p$-adic Tate module of $M$, which is the projective limit $\varprojlim_{n}M[p^n]$, we write $\Vp(M)$ for $\Tp(M)[\tfrac{1}{p}]$, and we write $M^\wedge$ for the $p$-adic completion of $M$. If $M$ is endowed with a $\Gamma_k$-action, we denote by $M^{\Gamma_k}$ the subgroup of fixed points. For a scheme $X$ and an fppf sheaf $\calF$, we denote by $H^{\bullet}(X,\calF)$ the fppf cohomology groups and when $X=\Spec(k)$ we simply write $H^{\bullet}(k,\calF)$. If $f:X\to Y$ is a morphism of schemes, we denote by $R^{\bullet}f_*\calF$ the fppf higher direct image functors over $(\mathbf{Sch}/Y)_\fppf$. Finally, if $X$ is a scheme over $\Fp$, we write $X^{\mathrm{perf}}$ for the projective limit $\varprojlim(...\xrightarrow{F}X\xrightarrow{F}X\xrightarrow{F}X)$ where $F$ is the absolute Frobenius of $X$.
\section{Preliminary results}\label{prel-resu:s}

In this section we start by proving some results that we will use later on. We work over a field $k$ of arbitrary characteristic and we consider a scheme $X$ over $k$ with structural morphism $q$.

\begin{lemm}\label{fppf-exte-scal:l}
Let $\calF$ be a sheaf over $(\mathbf{Sch}/k)_\fppf$ such that $q_*\calF_{X}=\calF$ and suppose that $X$ has a $k$-rational point. The group  $H^0(k,R^1q_*\calF_{X})$ is canonically isomorphic to $H^1(X,\calF_X)/H^1(k,\calF)$. In addition, the natural morphism $H^2(X,\calF_X)\to H^0(k,R^2q_*\calF_{X})$ sits in an exact sequence 
$$0\to K\to H^2(X,\calF_X)\to H^0(k,R^2q_*\calF_{X})\to H^2(k,R^1q_*\calF_{X})$$
where $K$ is an extension of $H^1(k,R^1q_*\calF_{X})$ by $H^2(k,\calF)$. 
\end{lemm}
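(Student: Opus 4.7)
The plan is to apply the fppf Leray (Grothendieck) spectral sequence for $q\colon X\to\Spec(k)$,
$$E_2^{p,q}=H^p(k,R^qq_*\calF_X)\Longrightarrow H^{p+q}(X,\calF_X),$$
and exploit the section $s\colon\Spec(k)\to X$ coming from the $k$-rational point. By hypothesis $R^0q_*\calF_X=\calF$, so the bottom row of $E_2$ is $H^p(k,\calF)$.

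The key structural observation is that, because $q\circ s=\id$, we have $s^*\circ q^*=\id$ on cohomology, so each edge map $H^p(k,\calF)\to H^p(X,\calF_X)$ admits a retraction. Hence the composite $E_2^{p,0}\twoheadrightarrow E_\infty^{p,0}\hookrightarrow H^p(X,\calF_X)$ is split injective, which forces $E_r^{p,0}=E_\infty^{p,0}$ for every $r\geq 2$; equivalently, every differential $d_r$ whose target lies in the bottom row vanishes. In particular $d_2\colon E_2^{p-2,1}\to E_2^{p,0}$ is zero for all $p\geq 2$, and $d_3\colon E_3^{0,2}\to E_3^{3,0}$ is zero.

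For the first assertion, the filtration on $H^1(X,\calF_X)$ has graded pieces $E_\infty^{1,0}=H^1(k,\calF)$ and $E_\infty^{0,1}=E_2^{0,1}=H^0(k,R^1q_*\calF_X)$ (the latter because the only possibly nonzero differential out of $(0,1)$ targets the bottom row and is therefore zero). The resulting short exact sequence $0\to H^1(k,\calF)\to H^1(X,\calF_X)\to H^0(k,R^1q_*\calF_X)\to 0$ yields the desired identification.

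For the second assertion, the filtration on $H^2(X,\calF_X)$ has three graded pieces: $E_\infty^{2,0}=H^2(k,\calF)$; $E_\infty^{1,1}=E_2^{1,1}=H^1(k,R^1q_*\calF_X)$, since $d_2\colon E_2^{1,1}\to E_2^{3,0}$ targets the bottom row; and $E_\infty^{0,2}=\ker\bigl(d_2\colon H^0(k,R^2q_*\calF_X)\to H^2(k,R^1q_*\calF_X)\bigr)$, since no $d_2$ enters $(0,2)$ and the $d_3$ out of $(0,2)$ lands in the bottom row. Taking $K$ to be the second step of the filtration on $H^2(X,\calF_X)$ gives the extension $0\to H^2(k,\calF)\to K\to H^1(k,R^1q_*\calF_X)\to 0$, together with $0\to K\to H^2(X,\calF_X)\to E_\infty^{0,2}\to 0$; combining the latter with the inclusion $E_\infty^{0,2}\hookrightarrow E_2^{0,2}=H^0(k,R^2q_*\calF_X)$ yields the claimed four-term exact sequence. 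No genuine obstacle arises: the entire argument is spectral-sequence bookkeeping plus the standard section trick that kills differentials into the bottom row, and the only point needing slight care is to phrase this functorially in the fppf big-site setting, which is automatic from the functoriality of the Grothendieck spectral sequence applied to the composable pair $q\circ s=\id$.
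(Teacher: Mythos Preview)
Your proposal is correct and follows essentially the same route as the paper: both run the Leray spectral sequence for $q$, use the $k$-rational point to see that the edge maps $E_2^{p,0}=H^p(k,\calF)\to H^p(X,\calF_X)$ are injective (you phrase this via the retraction $s^*$, the paper just says ``injective''), deduce that all differentials into the bottom row vanish so that $E_\infty^{2,0}=E_2^{2,0}$, $E_\infty^{1,1}=E_2^{1,1}$, and $E_\infty^{0,2}=\ker(d_2\colon E_2^{0,2}\to E_2^{2,1})$, and then read off the filtration on $H^2$. Your write-up is slightly more explicit about which differentials are being killed, but there is no substantive difference.
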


\begin{proof}
We consider the Leray spectral sequence
$$E^{i,j}_2=H^i(k,R^jq_{*}\calF_X)\Rightarrow H^{i+j}(X,\calF_X).$$ 	The morphisms $E^{i,0}_2=H^i(k,q_*\calF_X)=H^i(k,\calF)\to H^i(X,\calF_X)$ are injective since $X$ admits a $k$-rational point. We deduce that $E^{1,1}_2=E^{1,1}_\infty$ and $E^{2,0}_2=E^{2,0}_\infty$. This implies that the kernel of $H^2(X,\calF_X)\to E^{0,2}_\infty$ is an extension of $E^{1,1}_2$ by $E^{2,0}_2$, as we wanted. The obstruction for the map $H^2(X,\calF_X)\to E^{0,2}_2=H^0(k,R^2q_*\calF_{X})$ to be surjective lies in $E^{2,1}_2=H^2(k,R^1q_*\calF_X)$. This concludes the proof.
\end{proof}

\begin{defi}
	We say that a presheaf $\calF$ on $(\mathbf{Sch}^{\mathrm{qcqs}}/k)_{\fppf}$ is \textit{finitary} if for every inverse system $\{T^{(\ell)}\}_{\ell \in L}$ of quasi-compact quasi-separated $k$-schemes with affine transition maps, the natural morphism 

$$\mathrm{colim}_{\ell\in L}\calF(T^{(\ell)})\to \calF(\lim_{\ell\in L} T^{(\ell)})$$ is an isomorphism.
\end{defi}
\begin{lemm}\label{limi:l}
	Let $G$ be a commutative finite type group scheme over $k$. If $X$ is quasi-compact quasi-separated, then $R^iq_*G_X$ is finitary for $i\geq 0$. In addition, the natural morphism $H^0(k,R^i q_*G_X)\to H^{i}(X_\kbar,G_{X_\kbar})$ is injective.

\end{lemm}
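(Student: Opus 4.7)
The sheaf $R^i q_* G_X$ is, by definition, the fppf sheafification of the presheaf $\calP : T \mapsto H^i(X_T, G_{X_T})$ on $(\mathrm{Sch}/k)_\fppf$. My plan is to first identify the target $H^i(X_\kbar, G_{X_\kbar})$ with $(R^i q_* G_X)(\kbar)$, and then show that the pullback map $(R^i q_* G_X)(k) \to (R^i q_* G_X)(\kbar)$ is injective.

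For the first step, I would use that over the algebraically closed field $\kbar$ every fppf cover admits a section: pick a closed point of the cover and apply the Nullstellensatz to see that its residue field is $\kbar$. Hence the plus construction is trivial on $\Spec \kbar$, yielding a canonical isomorphism $\calP(\kbar) \iso (R^i q_* G_X)(\kbar)$, and the morphism in the lemma is simply the evaluation of the sheaf $R^i q_* G_X$ along $\Spec \kbar \to \Spec k$.

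For injectivity, given $s \in H^0(k, R^i q_* G_X)$ with $s|_\kbar = 0$, I would first unwind the sheafification (and refine to a quasi-compact affine cover) to produce a single fppf cover $\Spec A \to \Spec k$ together with a class $t \in H^i(X_A, G_{X_A})$ whose image in $(R^i q_* G_X)(A)$ equals $s|_A$. Since $A$ is finitely presented over $k$, the Nullstellensatz ensures that a closed point of $\Spec A$ has residue field a finite extension $L/k$, giving a $k$-algebra map $A \to L$; fixing an embedding $L \hookrightarrow \kbar$, the pullback $t|_L \in H^i(X_L, G_{X_L})$ further maps to $s|_\kbar = 0$. Writing $\kbar = \varinjlim_{L'} L'$ over finite extensions $L \subseteq L' \subseteq \kbar$ and exploiting that $X$ is qcqs (so cohomology commutes with this filtered colimit), one obtains $t|_{L'} = 0$ in $H^i(X_{L'}, G_{X_{L'}})$ for some such $L'$. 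Since $\Spec L' \to \Spec k$ is a one-element fppf cover and $s|_{L'} = [t|_{L'}] = 0$ in the sheaf, the sheaf axiom forces $s = 0$.

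The main obstacle is the technical interplay between the sheafification $\calP \to R^i q_* G_X$ and the colimit description of $\kbar$-cohomology; the qcqs hypothesis on $X$ is indispensable for trading the vanishing over $\kbar$ for vanishing over some finite extension, while the finite type hypothesis on $G$ enters only to ensure that $R^i q_* G_X$ makes sense as an abelian fppf sheaf.
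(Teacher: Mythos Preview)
Your argument is correct and uses the same two inputs as the paper's proof: (i) any fppf cover of the spectrum of a field refines to a single finite field extension, and (ii) for $X$ qcqs and $G$ of finite presentation, $H^i(X_\kbar,G)=\varinjlim_{L'/k\ \mathrm{finite}}H^i(X_{L'},G)$. The organisation differs slightly---the paper computes the plus construction $\calF^{+}(k')$ directly at each field point, using (i) to see that finite extensions are cofinal among covers and (ii) to identify the resulting colimit of equalisers as a subgroup of $H^i(X_\kbar,G)$, whereas you split the statement into ``sheaf value at $\kbar$ equals presheaf value'' and a spreading-out injectivity chase---but the content is the same.

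One correction to your closing remark: the higher direct image $R^iq_*G_X$ exists for \emph{any} abelian fppf sheaf, so the finite-type hypothesis on $G$ is not what makes it ``make sense''. Where that hypothesis is genuinely used is precisely in step~(ii), i.e.\ at the moment you trade the vanishing of $t|_{\kbar}$ for vanishing over some finite extension $L'$; this requires $G$ to be locally of finite presentation so that sections and cohomology commute with the filtered colimit $\kbar=\varinjlim L'$. The paper invokes the hypothesis for exactly this purpose in its first two sentences (citing \cite[Tag~073E]{SP22}). Without it that descent step, and hence your injectivity argument, would not go through.
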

\begin{proof}
 Let ${\calH}^i(q,G_{X})$ be the higher presheaf pushforward of $G_{X}$ on $X$ with respect to $q$. We first want to prove that ${\calH}^i(q,G_{X})$ is finitary for $i\geq 0$. In other words, we want to prove that for every inverse system $\{T^{(\ell)}\}_{\ell \in L}$ of quasi-compact quasi-separated $k$-schemes, the natural morphism $$\mathrm{colim}_{\ell\in L}H^i(X^{(\ell)},G_{X^{(\ell)}})\to H^i(X^{(\infty)},G_{X^{(\infty)}})$$ is an isomorphism, where $X^{(\ell)}:=X\times_k T^{(\ell)}$ and $X^{(\infty)}:=\lim_{\ell\in L}X^{(\ell)}$. By \cite[Tag 01H0]{SP22},  $$H^i(X^{(\ell)},G_{X^{(\ell)}})=\mathrm{colim}_{U_\bullet^{(\ell)}\in \mathrm{HC}(X^{(\ell)})}\check{H}^i(U_\bullet^{(\ell)},G_{U_\bullet^{(\ell)}})$$
 for every $\ell\in L \coprod \{\infty\}$, where $\mathrm{HC}(X^{(\ell)})$ is the category of fppf hypercoverings of $X^{(\ell)}$. Since each $X^{(\ell)}$ is quasi-compact quasi-separated, by [\textit{ibid.}, Tag 021P], we can replace the category $\mathrm{HC}(X^{(\ell)})$ in the colimit with the subcategory $\mathrm{HC}(X^{(\ell)})^{\mathrm{qcqs}}$, consisting of those hypercoverings such that $U^{(\ell)}_n$ is quasi-compact quasi-separated for every $n\geq 0$.  By [\textit{ibid.}, Lem. 01ZM], for $U^{(\infty)}_\bullet\in\mathrm{HC}(X^{(\infty)})^{\mathrm{qcqs}}$ and $n\geq 0$ there exists an $\ell\in L$ and $U_\bullet^{(\ell,n)}\in \mathrm{HC}(X^{(\ell)})^{\mathrm{qcqs}}$ such that $$\mathrm{tr}_n(U_\bullet^{(\ell,n)}\times_{T^{(\ell)}} T^{(\infty)})\simeq \mathrm{tr}_n(U^{(\infty)}_\bullet),$$ where $\mathrm{tr}_n(-)$ denotes the $n$-th truncation of simplicial schemes and $T^{(\infty)}:=\lim_{\ell\in L}T^{(\ell)}$. This implies that $$H^i(X^{(\infty)},G_{X^{(\infty)}})=\mathrm{colim}_{\ell\in L}\mathrm{colim}_{U_\bullet^{(\ell)}\in \mathrm{HC}(X^{(\ell)})}\check{H}^i(U_\bullet^{(\ell)}\times_{T^{(\ell)}} T^{(\infty)},G_{U_\bullet^{(\ell)}\times_{T^{(\ell)}} T^{(\infty)}}).$$

  We are reduced to prove that for every $\ell\in L$ and $U_\bullet^{(\ell)}\in \mathrm{HC}(X^{(\ell)})^{\mathrm{qcqs}}$ we have that $$\check{H}^i(U_\bullet^{(\ell)}\times_{T^{(\ell)}}T^{(\infty)},G_{U_\bullet^{(\ell)}\times_{T^{(\ell)}}T^{(\infty)}})=\mathrm{colim}_{\ell\leq \ell'}\check{H}^i(U_\bullet^{(\ell)}\times_{T^{(\ell)}}T^{(\ell')},G_{U_\bullet^{(\ell)}\times_{T^{(\ell)}}T^{(\ell')}}).$$ Since $G$ is of finite type over $k$, this follows from [\textit{ibid.}, Lem. 01ZM] and the exactness of filtered colimits.
 
\spa

Knowing that ${\calH}^i(q,G_{X})$ is finitary, in order to prove that $R^iq_*G_X$ is finitary as well it is enough to prove that for every finitary presheaf $\calF$ on $(\mathbf{Sch}^{\mathrm{qcqs}}/k)_{\fppf}$, the “partial” sheafification $\calF^+$ (defined as in [\textit{ibid.}, §00W1]) is finitary. Similarly to the previous paragraph, the proof of this fact follows from the observation that each finite quasi-compact quasi-separated fppf covering of $X^{(\infty)}$ descends to a covering of $X^{(\ell)}$ for some $\ell\in L$ and Čech cohomology commutes with filtered colimits ($\check{H}^0$ is enough in this case).

	\spa
	For the second part, we note that for every presheaf $\calF$ on $(\mathbf{Sch}/k)_{\mathrm{fppf}}$ with sheafification $\calF^\sharp$, the natural morphism $$\calF(\Spec(\kbar))\to \calF^\sharp(\Spec(\kbar))$$ is an isomorphism because every fppf covering of $\Spec(\kbar)$ admits a section. This implies that $$H^0(\kbar,R^iq_*G_X)= H^{i}(X_\kbar,G_{X_\kbar}).$$
	
	Thanks to the previous part, we deduce that the composition $$H^0(k,R^i q_*G_X)\hookrightarrow \mathrm{colim}_{{k'/k\  \mathrm{fin.}}}{H}^0(k',R^i q_*G_X)\iso H^0(\kbar,R^iq_*G_X)= H^{i}(X_\kbar,G_{X_\kbar})$$ is injective, where the colimit runs over all finite field extensions of $k$. This ends the proof.\end{proof}

With the previous results we can prove \cite[Prop. 5.6]{Gro68}, which was stated by Grothendieck without a complete proof\footnote{Note that the result is also proven in \cite[Thm. 5.2.5.i]{CS21}, but their proof has a gap since the justification of the fact that $H^0(k,R^2p_*\mathbb{G}_{m,X})\to H^0(\kbar,R^2p_*\mathbb{G}_{m,X})$ is injective is not correct.}.
\begin{coro}\label{Br-inj:c}If $k$ is separably closed and $X$ is a proper $k$-scheme, then there is a natural exact sequence $$0\to H^1(k,\Pic_{X/k})\to\Br(X)\to \Br(X_{\kbar}).$$ In particular, if $\Pic_{X/k}$ is smooth then the natural morphism $\Br(X)\to \Br(X_{\kbar})$ is injective.
\end{coro}

\begin{proof}As in Lemma \ref{fppf-exte-scal:l}, we consider the Leray spectral sequence
	$$E^{i,j}_2=H^i(k,R^jq_{*}\mathbb{G}_{m,X})\Rightarrow H^{i+j}(X,\mathbb{G}_{m,X}).$$ Since $X$ is proper over $k$, by \cite[Tag 0BUG]{SP22} we deduce that $A:=H^0(X,\calO_X)$ is a finite $k$-algebra. This implies that $q_*\Gm$ is represented by a smooth group scheme over $k$. Thanks to \cite[Thm. 11.7]{Gro68}, we deduce that $E^{i,j}_2=0$ for $i>0$ and $j=0$, so that $E^{1,1}_2=E^{1,1}_\infty$. The Leray spectral sequence produces then the exact sequence $$0\to H^1(k,\Pic_{X/k})\to\Br(X)\to H^0(k,R^2q_*\mathbb{G}_{m,X}).$$
	
	To get the first part of the statement it is then enough to apply Lemma \ref{limi:l}. For the second part, we note that when $\Pic_{X/{k}}$ is smooth, thanks to [\textit{ibid.}, Thm. 11.7], the group $H^1(k,\Pic_{X/{k}})$ vanishes.
\end{proof}

\begin{defi}
For a scheme $X$ over $k$ and a prime $p$, we define $H^2(X,\Z_p(1))$ as the projective limit $$\varprojlim_{n} H^2(X,\mupn).$$
\end{defi}
\begin{rema}Note that we are defining $H^2(X,\Z_p(1))$ without taking into account higher inverse limits. Nonetheless, if $k$ is algebraically closed of characteristic $p$ and $X$ is smooth and proper over $k$, then $R^1\varprojlim_n H^1(X,\mupn)=R^1\varprojlim_n \Pic(X)[p^n]=0$ since $\Pic(X)[p^\infty]$ is a direct sum of a $p$-divisible group and a finite group and $R^1\varprojlim_n H^2(X,\mupn)=0$ by 
\cite[Chap. II, Prop. 5.9]{Ill79}.
\end{rema}

\begin{cons}
The Kummer exact sequences for $X$ and $X_{\kbar}$ (for the fppf topology) induce the following commutative diagram with exact rows
	\begin{equation}\label{comp:e}\begin{tikzcd}
			0\arrow[r] &\Pic(X)/p^n \arrow[d,""]\arrow[r] & H^2_{}(X,\mupn)\arrow[d,""]\arrow[r] & \mathrm{Br}(X)[p^n]\arrow[r]\arrow[d,""] & 0\\
			0\arrow[r]& \Pic(X_\kbar)/p^n \arrow[r]& H^2_{}(X_\kbar,\mupn)\arrow[r]& \mathrm{Br}(X_\kbar)[p^n]\arrow[r]& 0.
		\end{tikzcd}
	\end{equation}

We write 	$$C_n(X):=(\Pic(X_\kbar)/p^n)^k\to H^2(X_\kbar,\mupn)^{k}\to (\Br(X_{\kbar})[p^n])^{k}\to 0\to \dots$$
for the complex obtained by taking images of the vertical arrows. Note that a priori $(\Br(X_{\kbar})[p^n])^{k}$ is smaller than $\Br(X_\kbar)^k[p^n]$, where $\Br(X_\kbar)^k:=\myim(\Br(X)\to \Br(X_\kbar))$.

\spa

Since both $R^1\varprojlim_{n}\Pic(X)/p^n$ and $R^1\varprojlim_{n}\Pic(X_\kbar)/p^n$ vanish, we can also consider the following commutative diagram with exact rows
\begin{equation*}\label{diag:e}\begin{tikzcd}
		0\arrow[r] &\Pic(X)^\wedge \arrow[d,""]\arrow[r] & H^2_{}(X,\Zp(1))\arrow[d,""]\arrow[r] & \Tp(\mathrm{Br}(X))\arrow[r]\arrow[d,""] & 0\\
		0\arrow[r]& \Pic(X_\kbar)^\wedge \arrow[r]& H^2_{}(X_\kbar,\Zp(1))\arrow[r]& \Tp(\mathrm{Br}(X_\kbar))\arrow[r]& 0,
	\end{tikzcd}
\end{equation*}

obtained by taking the projective limit of the diagrams (\ref{comp:e}) for various $n$. We denote by $$\hat{C}(X):=
	(\Pic(X_\kbar)^\wedge)^k\to H^2(X_\kbar,\Z_p(1))^{k}\to \Tp(\Br(X_{\kbar}))^{k}\to 0\to \dots
$$
the complex obtained by taking images of the vertical arrows.
\end{cons}

\begin{prop}\label{Kummer-k-inv:p} If $\mathrm{char}(k)=p$ and $A$ is an abelian variety over $k$ such that the morphism $\Pic(A)\to \NS(A_{\kbar})$ is surjective, then the complexes $C_n(A)$ and $\hat{C}(A)$ are acyclic.
\end{prop}
\begin{proof}

If $K_{1,n}$ is the kernel of $H^2_{}(A,\mupn)\to H^2(A_\kbar,\mupn)$ and $K_{2}$ is the kernel of $\Br(A)\to \Br(A_\kbar)$, in order to prove that $C_n(A)$ is acyclic we have to show that $K_{1,n}\to K_{2}[p^n]$ is surjective. Combining Lemma \ref{fppf-exte-scal:l} and Lemma \ref{limi:l}, we deduce the following commutative diagram with exact rows 

	\begin{equation*}\begin{tikzcd}
		0\arrow[r] &\Br(k)[p^n]\arrow[d,""]\arrow[r] &  K_{1,n}\arrow[d,""]\arrow[r] & H^1_{}(k,\Pic_{A/k}[p^n])\arrow[r]\arrow[d,""] & 0\\
		0\arrow[r]& \Br(k) \arrow[r]& K_2\arrow[r]& H^1_{}(k,\Pic_{A/k})\arrow[r]  & 0.
	\end{tikzcd}
\end{equation*}

The morphism of exact sequences factors through the complex $$\Br(k)[p^n]\to K_2[p^n]\to H^1_{}(k,\Pic_{A/k})[p^n]\to 0\to \dots$$ which is acyclic because $\Br(k)$ is $p$-divisible by \cite[Thm. 1.3.7]{CS21}. 
The image of $$H^1(k,\Pic_{A/k}[p^n])\to H^1_{}(k,\Pic_{A/k}^\circ)$$ is $H^1_{}(k,\Pic_{A/k}^\circ)[p^n]$, thus we are reduced to prove that $$H^1_{}(k,\Pic_{A/k}^\circ)[p^n]\to  H^1_{}(k,\Pic_{A/k})[p^n]$$ is surjective. Since $\Pic(A)\to \NS(A_\kbar)$ is surjective, we know that $\pi_0(\Pic_{A/k})$ is a constant finitely generated torsion-free group over $k$ such that $\Pic_{A/k}(k)\to \pi_0(\Pic_{A/k})(k)$ is surjective. Looking at the cohomology long exact sequence associated to $$0\to \Pic_{A/k}^\circ\to \Pic_{A/k}\to \pi_0(\Pic_{A/k})\to 0,$$ we then deduce that $H^1_{}(k,\Pic_{A/k}^\circ)\iso H^1_{}(k,\Pic_{A/k})$, which yields the desired result.
\spa

We now prove that $\hat{C}(A)$ is acyclic. The kernel of $H^2(A,\Zp(1))\to H^2(A_\kbar,\Zp(1))$ is $\varprojlim_n K_{1,n}$ and the kernel of $\Tp(\Br(A))\to \Tp(\Br(A_\kbar))$ is $\Tp(K_2)$. Thus, again, we have to prove that $\varprojlim_n K_{1,n}\to \Tp(K_2)$ is surjective. Combining the previous discussion and the fact that $\Br(k)$ is $p$-divisible, we deduce that the two groups sit in the following diagram with exact rows
	
	\begin{equation*}\begin{tikzcd}
			0\arrow[r] &\Tp(\Br(k))\arrow[d,""]\arrow[r] &  \varprojlim_n K_{1,n}\arrow[d,""]\arrow[r] & \varprojlim_nH^1_{}(k,\Pic_{A/k}[p^n])\arrow[r]\arrow[d,""] & 0\\
			0\arrow[r]& \Tp(\Br(k)) \arrow[r]& \Tp(K_2)\arrow[r]& \Tp(H^1_{}(k,\Pic_{A/k}))\arrow[r]& 0.
		\end{tikzcd}
	\end{equation*}


For every $n>0$, the kernel of $H^1_{}(k,\Pic_{A/k}[p^n]) \to H^1_{}(k,\Pic_{A/k})[p^n]$ is $\Pic(A)/p^n$ and the groups $(\Pic(A)/p^n)_{n>0}$ form a Mittag--Leffler system. We deduce that the morphism $$\varprojlim_{n} H^1_{}(k,\Pic_{A/k}[p^n])\to \Tp(H^1_{}(k,\Pic_{A/k}))$$ is surjective. This implies that $\hat{C}(A)$ is acyclic, as we wanted.
\end{proof}

The proof of the following proposition was inspired by \cite[Thm. 2.1]{CS13}.

\begin{prop}\label{tricky:p}If $\mathrm{char}(k)=p$ and $A$ is an abelian variety over $k$, we have $H^2(A_\kbar,\Zp(1))^{k}[\tfrac{1}{p}]=(\varprojlim_{n}H^2(A_\kbar,\mupn)^{k})[\tfrac{1}{p}]$ and $\Tp(\Br(A_{\kbar}))^{k}[\tfrac{1}{p}]=\Vp(\Br(A_{\kbar})^{k}).$
\end{prop}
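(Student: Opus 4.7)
The plan is to prove both equalities simultaneously by reducing to the case where $A$ is the Jacobian of a smooth projective curve. First, I would observe that the two claims are linked through the exact sequences (\ref{1:e}) and (\ref{2:e}): taking $\varprojlim_n$ of the first and comparing with the second via the snake lemma, the equality for $\Tp(\Br(A_\kbar))^k$ follows from the equality for $H^2(A_\kbar,\Z_p(1))^k$, together with the analogous statement $(\Pic(A_\kbar)^\wedge)^k[\tfrac{1}{p}] = (\varprojlim_n (\Pic(A_\kbar)/p^n)^k)[\tfrac{1}{p}]$, which is straightforward since $\NS(A_\kbar)$ is finitely generated and $A^\vee(\kbar)$ is $p$-divisible. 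So the core task is to show that the natural inclusion $H^2(A_\kbar,\Z_p(1))^k \hookrightarrow \varprojlim_n H^2(A_\kbar,\mu_{p^n})^k$ has cokernel killed by a bounded power of $p$; this cokernel is measured by the $\varprojlim^1$ of the kernels $K_{1,n} = \ker\bigl(H^2(A,\mu_{p^n})\to H^2(A_\kbar,\mu_{p^n})\bigr)$ described in Proposition \ref{Kummer-k-inv:p}.

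Next I would reduce to Jacobians by a Lefschetz--Poincaré trick. Pick a smooth projective curve $C \subseteq A$ so that Albanese functoriality gives a surjection $\pi : J := \mathrm{Jac}(C) \twoheadrightarrow A$. By Poincaré's complete reducibility, there exists a homomorphism $\iota : A \to J$ and a positive integer $N$ with $\pi \circ \iota = [N]_A$. Since $[N]_A^* = N^2 \cdot \id$ on degree-$2$ cohomology of $A$, the composition $\iota^* \circ \pi^*$ acts as $N^2$ on all the $\Z_p$-modules attached to $A$ in the proposition. Writing $N = p^a N'$ with $\gcd(N',p) = 1$, the factor $(N')^2$ is a unit in $\Z_p$, so $N^2$ agrees with $p^{2a}$ up to a unit. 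A diagram chase on the commutative square
\[
\begin{tikzcd}
H^2(A_\kbar,\Z_p(1))^k \arrow[r,hook] \arrow[d,"\pi^*"'] & \varprojlim_n H^2(A_\kbar,\mu_{p^n})^k \arrow[d,"\pi^*"] \\
H^2(J_\kbar,\Z_p(1))^k \arrow[r,hook] & \varprojlim_n H^2(J_\kbar,\mu_{p^n})^k
\end{tikzcd}
\]
combined with the retraction $\iota^*$ then shows that if the cokernel of the bottom inclusion is killed by $p^c$, the cokernel of the top is killed by $p^{c+2a}$; hence the statement for $A$ follows from the statement for $J$.

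The main obstacle is the Jacobian case itself. The reason to pass to $J$ is that Tsen's theorem gives $\Br(C_\kbar)=0$, and therefore $H^2(C_\kbar,\mu_{p^n}) = \Pic(C_\kbar)/p^n$ is entirely controlled by line bundles; consequently the base-change kernel $\ker(H^2(C,\mu_{p^n})\to H^2(C_\kbar,\mu_{p^n}))$ is much more tractable than for a general abelian variety. Using the Abel--Jacobi morphism $C\to J$, the autoduality of $J$, and (if needed) the sum map $C^g\to J$ to reach the relevant piece of $H^2$, I would transfer the $\varprojlim^1$ computation for the kernel system of $J$ --- which by Proposition \ref{Kummer-k-inv:p} is an extension of $H^1(k,\Pic_{J/k}[p^n])$ by $\Br(k)[p^n]$ --- to one involving only the curve $C$. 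The $\Br(k)$-piece has vanishing $\varprojlim^1$ because $\Br(k)$ is $p$-divisible, and the Picard-piece on $C$ is controlled after inverting $p$ thanks to the vanishing $\Br(C_\kbar)=0$. The delicate point, inspired by \cite[Thm.~2.1]{CS13}, is precisely that one cannot bound $\varprojlim^1 H^1(k,\Pic_{J/k}[p^n])$ purely by cohomological-dimension arguments (since $k$ is only a finitely generated field); the auxiliary curve provides the extra input that makes the Mittag--Leffler-type argument close up.
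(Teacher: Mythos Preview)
Your overall strategy---reduce to a Jacobian $J=\mathrm{Jac}(C)$ and exploit $\Br(C_\kbar)=0$---matches the paper's. But there is a genuine gap in how you handle the second equality. Comparing $\varprojlim_n$ of (\ref{1:e}) with (\ref{2:e}) via the snake lemma only involves the groups $(\Br(A_\kbar)[p^n])^k:=\myim\bigl(\Br(A)[p^n]\to \Br(A_\kbar)[p^n]\bigr)$, so at best it yields $\Tp(\Br(A_\kbar))^k[\tfrac{1}{p}]=\bigl(\varprojlim_n (\Br(A_\kbar)[p^n])^k\bigr)[\tfrac{1}{p}]$. The proposition, however, asks for $\Vp(\Br(A_\kbar)^k)=\Tp(\Br(A_\kbar)^k)[\tfrac{1}{p}]$, and the paper explicitly warns just after (\ref{1:e}) that $(\Br(A_\kbar)[p^n])^k$ can be strictly smaller than $\Br(A_\kbar)^k[p^n]$: a class in the latter lifts to $\Br(A)$ but need not lift to $\Br(A)[p^n]$. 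Your snake-lemma reduction does not bridge this, and the paper devotes its entire last paragraph to it: using $\Br(C_\kbar)=0$ a \emph{second} time, one shows that the kernel of $\Br(B)\to \Br(B_\kbar)^k$ differs from $\ker(\Br(B)\to\Br(C))$ only by a finite group, whence $\Br(B)[p^\infty]\to \Br(B_\kbar)^k[p^\infty]$ splits up to multiplication by some $p^m$.

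A smaller point on the first equality: you propose to bound $R^1\varprojlim K_{1,n}$ directly by treating its two pieces, but you leave the Picard piece $R^1\varprojlim H^1(k,\Pic_{J/k}[p^n])$ unexplained, and there is no evident bound for a general finitely generated $k$. The paper does not bound $R^1\varprojlim K_{1,n}$ at all; instead it shows that $R^1\varprojlim K_{1,n}\to R^1\varprojlim H^2(J,\mupn)$ is \emph{injective}, by identifying $K_{1,n}$ with the analogous kernel for $C$ (since $\Pic_{C/k}[p^n]=\Pic_{J/k}[p^n]$) and using that $H^2(C,\Zp(1))\to H^2(C_\kbar,\Zp(1))=\Zp$ is surjective. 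Since the cokernel of $H^2(J_\kbar,\Zp(1))^k\hookrightarrow \varprojlim_n H^2(J_\kbar,\mupn)^k$ is exactly the kernel of that map, it vanishes outright---no $[\tfrac{1}{p}]$ needed at this stage.
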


\begin{proof} We first note that the four $\Qp$-vector spaces are invariant under isogenies of $A$ and finite separable extension of $k$. Indeed, for every isogeny $\varphi:B\to A$ there exists an isogeny $\psi: A\to B$ such that the composition $\varphi \circ \psi$ is the multiplication by some positive integer $n$. Since $n$ is invertible in $\Qp$, we deduce that $\varphi^*$ is an isomorphism at the level of cohomology groups. Similarly, if $k'/k$ is a finite separable extension, then the pullback morphisms with respect to $A_{k'}\to A$ admit as inverse the morphisms $\tfrac{1}{[k':k]}\mathrm{Tr}_{A_{k'}/A}$.
	
	\spa
Next, thanks to \cite[Thm. 11]{Kat99}, we note that there exists a proper smooth connected curve $C$ with a rational point and a morphism $C\to A$ such that $B:=\mathrm{Jac}(C)$ maps surjectively to $A$. By Poincaré's complete reducibility theorem, $B$ is isogenous to a product $A\times_k A'$ with $A'$ an abelian variety over $k$. Since $H^2(A_\kbar,\Zp(1))^{k}$ (resp. $\Br(A_{\kbar})^k$) is a direct summand of $H^2(A_\kbar\times_\kbar A'_\kbar,\Zp(1))^{k}$ (resp. $\Br(A_\kbar\times_\kbar A'_{\kbar})^k$) and the property we want to prove is invariant by isogenies, it is then enough to prove the result for $B$. In addition, since in the statement it is harmless to extend $k$ to a finite separable extension, we may assume that $\Pic(B)\to \NS(B_{\kbar})$ is surjective, so that $H^1(k,\Pic^\circ_{B/k})=H^1(k,\Pic_{B/k}).$
\spa

Let $K_{1,n}$ be the kernel of the morphism $H^2(B,\mupn)\to H^2(B_\kbar,\mupn)$. By Lemma \ref{fppf-exte-scal:l} and Lemma \ref{limi:l}, the group $K_{1,n}$ is an extension of $H^1(k,\Pic_{B/k}[p^n])$ by $\Br(k)[p^n]$ and by the assumption $\Pic_{C/k}[p^n]=\Pic_{B/k}[p^n]$. We deduce that $K_{1,n}=\ker(H^2(C,\mupn)\to H^2(C_\kbar,\mupn))$. By \cite[Rmq. 2.5.b]{Gro68}, the group $\Br(C_\kbar)$ vanishes, thus $H^2(C_{\kbar},\Zp(1))=\Zp$ and the morphism $H^2(C,\Zp(1))\to H^2(C_{\kbar},\Zp(1))$ is surjective because $C$ has a rational point. This implies that $R^1 \varprojlim_{n}K_{1,n}\to R^1 \varprojlim_{n}H^2(C,\mupn)$ is injective. Since $$R^1 \varprojlim_{n}K_{1,n}\to R^1 \varprojlim_{n}H^2(C,\mupn)$$ factors through $R^1 \varprojlim_{n}H^2(B,\mupn),$ we deduce that $R^1 \varprojlim_{n}K_{1,n}\to R^1 \varprojlim_{n}H^2(B,\mupn)$ is injective as well. Therefore, the morphism $H^2(B,\Zp(1))\to \varprojlim_{n}H^2(B_\kbar,\mupn)^{k}$ is surjective. Thanks to Proposition \ref{Kummer-k-inv:p}, for every $n$ the morphism $H^2(B_\kbar,\mupn)^{k}\to (\Br(B_\kbar)[p^n])^k$ is surjective with finite kernel, so that $\varprojlim_{n}H^2(B_\kbar,\mupn)^{k}\to  \varprojlim_n(\Br(B_\kbar)[p^n])^k$ is surjective as well. This implies that $\Tp(\Br(B_{\kbar}))^{k}\to\varprojlim_n(\Br(B_\kbar)[p^n])^k$ is surjective.

\spa

It remains to prove that for every $n$ we have $(\Br(B_\kbar)[p^n])^k=\Br(B_\kbar)^k[p^n]$. Consider the natural morphism $K_3\to \Br(C)$ where $K_3$ is the kernel of $\Br(B)\to \Br(B_{\kbar})$. Thanks to Lemma \ref{fppf-exte-scal:l} and Lemma \ref{limi:l} and using the fact that $\Br(C_\kbar)=0$, this morphism sits in the following commutative diagram with exact rows
\begin{equation*}\begin{tikzcd}
		0\arrow[r] &\Br(k)\arrow[d,""]\arrow[r] &  K_3\arrow[d,""]\arrow[r] & H^1(k,\Pic_{B/k})\arrow[r]\arrow[d,""] & 0\\
		0\arrow[r]& \Br(k) \arrow[r]& \Br(C)\arrow[r]&H^1(k,\Pic_{C/k}) \arrow[r]& 0.
	\end{tikzcd}
\end{equation*} 

Since $\Pic(B)\to \NS(B_\kbar)$ is surjective and $C$ is a curve, we have that $$H^1(k,\Pic_{B/k})=H^1(k,\Pic^\circ_{B/k})\simeq H^1(k,\Pic^\circ_{C/k})=H^1(k,\Pic_{C/k}).$$

 We deduce that $K_3\to \Br(C)$ is an isomorphism, thus $\Br(B)\to \Br(C)\iso K_3$ provides a splitting of the exact sequence $$0\to K_3\to \Br(B)\to \Br(B_\kbar)^k\to 0.$$ This implies that $\Br(B)[p^n]\to \Br(B_{\kbar})^k[p^n]$ is surjective and this yields the desired result. 
\end{proof}

\section{Constructing a morphism}\label{cons-morp:s}
\label{cons-a-morp:s}
Let $A$ be an abelian variety over a field $k$. For a line bundle $\calL$ of $A$ we write $\varphi_{\calL}:A\to A^\vee$ for the morphism which sends $x\mapsto t_x^*\calL\otimes\calL^{-1}$, where $t_x$ is the translation by $x$. In this section we want to complete the following solid square
	\begin{equation*}
	\begin{tikzcd}\Pic(A)^\wedge\arrow[r,"c_1"]\arrow[d,"\calL\mapsto \varphi_\calL"] & H^2(A,\Zp(1))\arrow[d,dotted]\\
		\Hom(A,A^\vee)_{\Zp}\arrow[r] & \Hom(A[p^\infty],A^\vee[p^\infty]).
	\end{tikzcd}
\end{equation*}	
If $k$ is an algebraically closed field of characteristic $0$ such a commutative diagram is constructed in \cite[Lem. 2.6]{OSZ21} using an analytic method. We propose instead an algebraic construction which works for any field.

\subsection{}
	Consider the morphism $h_1:H^2(A,\mupn)\to H^2(A\times_k A,\mupn)$ which sends a class $\alpha$ to $m^*(\alpha)-\pi_1^*(\alpha)-\pi_2^*(\alpha)$, where $\pi_1$ and $\pi_2$ are the two projections of $A\times_k A$. This morphism has the property that the first Chern class $c_1(\calL)\in H^2(A,\mupn)$ of a line bundle $\calL$ is sent to $c_1(\Lambda(\calL))$, the first Chern class of the associated Mumford bundle $\Lambda(\calL):=m^*\calL\otimes \pi_1^*\calL^{-1}\otimes \pi_2^*\calL^{-1}$. The Leray spectral sequence
\begin{equation}\label{lerayAxA:e}
	E^{i,j}_2:=H^i(A,R^j\pi_{2*}\mupn)\Rightarrow H^{i+j}(A\times_k A,\mupn)
\end{equation}
induces a filtration $0\subseteq F^2 H^{2}(A\times_k A,\mupn) \subseteq F^1 H^{2}(A\times_k A,\mupn)\subseteq H^{2}(A\times_k A,\mupn) $.
\begin{lemm}\label{image:l}The image of $h_1$ lies in $F^1H^{2}(A\times_k A,\mupn)$.
\end{lemm}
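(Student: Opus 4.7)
The plan is to show that the image of $h_1(\alpha)$ under the edge map $H^2(A \times A, \mupn) \to E^{0,2}_\infty \subseteq E^{0,2}_2 = H^0(A, R^2\pi_{2*}\mupn)$ vanishes; since $F^1$ is by definition the kernel of $H^2(A\times A, \mupn)\to E^{0,2}_\infty$, and $E^{0,2}_\infty$ injects into $E^{0,2}_2$, it suffices to check vanishing already in $E^{0,2}_2$.

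The key computation exploits the closed immersion $i_0 \colon A \hookrightarrow A \times A$, $x \mapsto (x, 0)$, which is simultaneously a section of $\pi_1$ and the inclusion of the fibre of $\pi_2$ over the origin $0 \in A(k)$. From $m \circ i_0 = \pi_1 \circ i_0 = \id_A$ and $\pi_2 \circ i_0$ constant at $0$, one computes
\[
i_0^* h_1(\alpha) = i_0^*(m^*\alpha) - i_0^*(\pi_1^*\alpha) - i_0^*(\pi_2^*\alpha) = \alpha - \alpha - 0 = 0
\]
in $H^2(A, \mupn)$. Via smooth proper base change for $\pi_2$, the operator $i_0^*$ factors as the edge map $H^2(A\times A,\mupn)\to H^0(A, R^2\pi_{2*}\mupn)$ followed by taking the geometric stalk at $\bar 0$, so the image of $h_1(\alpha)$ in $R^2\pi_{2*}\mupn$ has vanishing stalk at $\bar 0$.

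To upgrade single-point vanishing to global vanishing of the section, I would exploit that $\pi_2$ is the trivial bundle $A \times_{\Spec k} A \to A$: the sheaf $R^2\pi_{2*}\mupn$ is pulled back from $\Spec k$, so a section over the connected scheme $A$ is determined by its value at any one geometric point, and the vanishing at $\bar 0$ propagates everywhere. The main obstacle I anticipate is justifying this constancy of $R^2\pi_{2*}\mupn$ rigorously in the fppf setting, where Künneth and smooth proper base change for $\mupn$ in characteristic $p$ are subtle; as a backup I would instead check $i_y^* h_1(\alpha) = t_y^*\alpha - \alpha$ for every $y \in A(\bar k)$ and invoke that translations act trivially on $H^2(A_{\bar k}, \mupn)$ (which follows from the Hopf-algebra primitivity of $m^*$ on $H^1$ together with the cup-product structure), forcing every geometric stalk of the candidate section to vanish.
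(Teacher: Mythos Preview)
Your overall strategy coincides with the paper's: reduce to showing that the composite $H^2(A,\mupn) \xrightarrow{h_1} H^2(A\times A,\mupn) \to H^0(A, R^2\pi_{2*}\mupn)$ vanishes, and detect this via restriction $i_0^*$ (the paper writes $i_1$) to the fibre $A \times \{0\}$, where one computes $i_0^* h_1(\alpha) = 0$. (A minor imprecision: $i_0^*\pi_2^*\alpha$ is not literally zero in $H^2(A,\mupn)$ over $k$, since it factors through $H^2(k,\mupn)$; but $\pi_2^*\alpha$ already lies in $F^2$, so it contributes nothing to $E^{0,2}$, and after passing to $\kbar$ the term dies anyway.)

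The genuine gap is the step you yourself flag: going from ``the section of $R^2\pi_{2*}\mupn$ vanishes at $\bar 0$'' to ``the section is zero''. Neither proposed fix works in the fppf setting in characteristic $p$. For (a), the fact that $R^2\pi_{2*}\mupn$ is pulled back from $\Spec(k)$ does \emph{not} make it locally constant in any sense allowing a single geometric stalk to determine a global section; sheaves on the big fppf site represented by non-\'etale group schemes (e.g.\ with $\alpha_p$-type pieces) are ``pulled back from a point'' yet have sections invisible at $\kbar$-points. For (b), even granting translation-invariance of $H^2(A_{\kbar},\mupn)$, vanishing at \emph{every} $\kbar$-point of $A$ still does not force the section to be zero, for the same reason.

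The paper closes this gap with the representability theorem of Bragg--Olsson \cite{BO21}: $R^2q_*\mupn$ is represented on the big fppf site by a commutative algebraic group $G/k$, so $H^0(A, R^2\pi_{2*}\mupn) = \Mor_k(A, G)$. The neutral component $G^\circ$ is killed by $p^n$; since any pointed morphism $A \to G^\circ$ is a group homomorphism by rigidity and $A$ is $p^n$-divisible, every morphism $A \to G$ is constant, whence $\Mor_k(A,G) = G(k) = H^0(k, R^2q_*\mupn)$. Lemma~\ref{limi:l} then embeds this into $H^2(A_{\kbar},\mupn)$, and the resulting composite $H^2(A\times A,\mupn)\to H^2(A_{\kbar},\mupn)$ is precisely $i_1^*$, which you have correctly computed to annihilate $h_1$. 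Your outline is therefore the right one; the missing ingredient is exactly this representability-plus-rigidity argument in place of the constancy heuristic.
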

\begin{proof}
The spectral sequence (\ref{lerayAxA:e}) gives the exact sequence $$0\to F^1H^{2}(A\times_k A,\mupn)\to H^{2}(A\times_k A,\mupn)\to E^{0,2}_\infty\to 0.$$ Therefore, it is enough to check that the composition $$H^2(A,\mupn)\xrightarrow{h_1} H^2(A\times_k A,\mupn)\to H^0(A,R^2\pi_{2*}\mupn)$$ is the $0$-morphism. By \cite[Cor. 1.4]{BO21}, there exists a commutative linear algebraic group\footnote{Recall that a \textit{linear algebraic group} over $k$ is an affine group scheme of finite type over $k$.} $G$ over $k$ which represents $R^2q_{*}\mupn$ on the big fppf site $(\mathbf{Sch}/k)_{\fppf}$. Since $R^2\pi_{2*}\mupn$ is the restriction of $R^2q_{*}\mupn$ from $(\mathbf{Sch}/k)_{\fppf}$ to $(\mathbf{Sch}/A)_{\fppf}$, this implies that $H^0(A,R^2\pi_{2*}\mupn)$ can be computed as $\Mor_{\mathbf{Sch}/k}(A,G).$ Thanks to the fact that $G$ is affine, every morphism $A\to G$ contracts $A$ to a point. We deduce that $\Mor_{\mathbf{Sch}/k}(A,G)=\Mor_{\mathbf{Sch}/k}(0_A,G)=H^0(k,R^2q_{*}\mupn)$. By Lemma \ref{limi:l}, the group $H^0(k,R^2q_{*}\mupn)$ is naturally a subgroup of $H^2(A_\kbar,\mupn)$ and the induced morphism $$H^2(A\times_k A,\mupn)\to H^0(k,R^2q_{*}\mupn)\hookrightarrow H^2(A_\kbar,\mupn)$$ is given by the pullback via $i_1:A=A\times_k 0_A \hookrightarrow A\times_k A$ followed by the extension of scalars to $\kbar$. By construction, we have that $i_1^*\circ h_1=i_1^*\circ m^*-i_1^*\circ \pi_1^*=0$. This concludes the proof.
\end{proof}

\begin{lemm}\label{HomH1:l}
	Let $G$ be a finite commutative group scheme killed by a positive integer $n$. There is a natural injective morphism $f_n:\Hom(A[n],G)\to H^1(A,G)$ which admits a retraction $g_n$.
\end{lemm}

\begin{proof}
	Write $P_{n}$ for the $A[n]$-torsor over $A$ given by the multiplication by $n$. The morphism $f_n$ is then defined by $f_n(\sigma):=\sigma_*P_{n}$ for every $\sigma\in \Hom(A[n],G)$. We want to define now $g_n$ which sends a $G$-torsor $P$ over $A$ to an homomorphism $g_n(P): A[n]\to G$. By Cartier duality, this is the same as defining a morphism $g_n(P)^\vee:G^\vee\to (A[n])^\vee=A^\vee[n]$. For a scheme $T$ over $k$ and a $T$-point of $G^\vee$ corresponding to a morphism $\tau:G_T\to \mathbb{G}_{m,T}$ we define  $g_n(P)^\vee(\tau)$ as $\tau_{*}P_T\in H^1(A_T,\mathbb{G}_{m,T})[n]=A^\vee[n](T)$. To prove that $g_n\circ f_n=\id$ it is enough to note that for every $\sigma \in \Hom(A[n],G)$, every scheme $T$ over $k$, and every $\tau\in \Hom(G_T,\mathbb{G}_{m,T})$ we have that  $g_n(f_n(\sigma))^\vee(\tau)=\tau_*(\sigma_*P_n)_T=(\tau\circ \sigma_T)_*P_{n,T}$ is the line bundle over $A_T$ associated to $\tau\circ \sigma_T\in (A[n])^\vee(T)$ under the identification $(A[n])^\vee=A^\vee[n]$.
\end{proof}

\subsection{}

Thanks to Lemma \ref{image:l}, we can define $$\bar{h}_1: H^2(A,\mupn)\to H^1(A,A^\vee[p^n])$$

as the composition of $h_1$ and the natural morphism $$F^1H^2(A\times_k A,\mupn)\to H^1(A,R^1\pi_{2*}\mupn)=H^1(A,A^\vee[p^n]).$$ In addition, by Lemma \ref{HomH1:l} applied to $G=A^\vee[p^n]$, we get a morphism $$h_2:H^1(A, A^\vee[p^n])\to\Hom(A[p^n],A^\vee[p^n]).$$ We write $$h: H^2(A,\mupn)\to \Hom(A[p^n],A^\vee[p^n])$$ for the composition ${h_2}\circ\bar{h}_1$ and we denote with the same letter the induced morphism $H^2(A,\Zp(1))\to \Hom(A[p^\infty],A^\vee[p^\infty])$.

\begin{prop}\label{comm-squa:p}
The square
		\begin{equation}
	\begin{tikzcd}
		\Pic(A)/p^n \arrow[d,""]\arrow[r,"c_1"] & H^2_{}(A,\mupn) \arrow[d,"h"]\\
		\Hom(A,A^\vee)/p^n \arrow[r]& \Hom(A[p^n],A^\vee[p^n])
	\end{tikzcd}
\end{equation}

is commutative.
\end{prop}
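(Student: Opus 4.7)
The plan is to check commutativity by chasing a line bundle $\calL\in \Pic(A)$ around the square. Going down then right sends $\calL$ to $\phi_\calL[p^n]:A[p^n]\to A^\vee[p^n]$, where $\phi_\calL:A\to A^\vee$, $x\mapsto t_x^*\calL\otimes \calL^{-1}$, is the Mumford polarization attached to $\calL$. The task is to show that $h(c_1(\calL))=h_2(\bar h_1(c_1(\calL)))$ equals $\phi_\calL[p^n]$.

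The key step is to identify $\bar h_1(c_1(\calL))\in H^1(A,A^\vee[p^n])$ explicitly. By construction $h_1(c_1(\calL))=c_1(\Lambda(\calL))$ with $\Lambda(\calL)=m^*\calL\otimes \pi_1^*\calL^{-1}\otimes \pi_2^*\calL^{-1}$ the Mumford bundle. Viewed via $\pi_2:A\times A\to A$, the bundle $\Lambda(\calL)$ is canonically rigidified along the zero-section $\{0\}\times A$, and the fibrewise computation $\Lambda(\calL)|_{A\times \{x\}}\cong t_x^*\calL\otimes \calL^{-1}$ shows that the section of $R^1\pi_{2*}\Gm$ it defines is precisely the morphism $\phi_\calL\in \Hom(A,A^\vee)\subset H^0(A,R^1\pi_{2*}\Gm)$. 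The Kummer sequence $0\to A^\vee[p^n]\to A^\vee\xrightarrow{p^n} A^\vee\to 0$ identifies $R^1\pi_{2*}\mupn$ with $A^\vee[p^n]_A$ and, by naturality, matches the Leray edge map $F^1H^2(A\times A,\mupn)\to H^1(A,R^1\pi_{2*}\mupn)$ with the Kummer coboundary $\delta:H^0(A,R^1\pi_{2*}\Gm)\to H^1(A,R^1\pi_{2*}\mupn)$. Therefore $\bar h_1(c_1(\calL))=\delta(\phi_\calL)$, which concretely is the $A^\vee[p^n]$-torsor $\phi_\calL^*\Sigma$ on $A$, with $\Sigma\to A^\vee$ denoting the canonical torsor $A^\vee\xrightarrow{p^n} A^\vee$.

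It remains to evaluate $h_2$ on $\phi_\calL^*\Sigma$. For a $T$-point $x\in A[p^n](T)$, Cartier duality represents $x$ as a character $\tau:A^\vee_T[p^n]\to \mu_{p^n,T}\subset \mathbb{G}_{m,T}$. By naturality of pullback and pushforward for torsors, $\tau_*(\phi_\calL^*\Sigma)=\phi_\calL^*(\tau_*\Sigma)$ in $\Pic(A_T)$. The line bundle $\tau_*\Sigma\in \Pic(A^\vee_T)$ is, by the very construction of the Cartier/Weil duality between $A[p^n]$ and $A^\vee[p^n]$, exactly the line bundle on $A^\vee_T$ classified by $x\in A(T)=A^{\vee\vee}(T)$ through the Poincaré bundle. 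Consequently $\phi_\calL^*(\tau_*\Sigma)$ is classified by $\phi_\calL(x)\in A^\vee(T)$, i.e.\ it equals $\phi_\calL[p^n](x)$, as required. The step I expect to be most delicate is the Kummer/Leray compatibility above; it can be verified by resolving $A^\vee[p^n]$ by the two-term complex $[A^\vee\xrightarrow{p^n} A^\vee]$ and comparing the resulting hypercohomology spectral sequences, or sidestepped altogether by writing $\Lambda(\calL)=(\id_A,\phi_\calL)^*\calP$ with $\calP$ the Poincaré bundle on $A\times A^\vee$ and running the whole argument universally on $\calP$, where the torsor description becomes tautological.
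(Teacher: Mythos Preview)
Your proof is correct and follows essentially the same route as the paper. Both arguments identify $\bar h_1(c_1(\calL))$ as the pullback $\phi_\calL^*\Sigma$ of the tautological $A^\vee[p^n]$-torsor $\Sigma=[p^n]:A^\vee\to A^\vee$, via the Kummer/Leray compatibility you flagged; the paper carries this out by first passing to the Poincar\'e bundle on $A\times A^\vee$ (the very alternative you sketch at the end), whereas your main line works directly with $\Lambda(\calL)$ on $A\times A$ and then computes $h_2(\phi_\calL^*\Sigma)$ explicitly, a step the paper leaves implicit after the reduction to the tautological torsor.
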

\begin{proof}
We have to show that for every line bundle $\calL\in \Pic(A)$ we have $$h(c_1(\calL))=\varphi_{\calL}|_{A[p^n]}.$$ Consider the Leray spectral sequence 
\begin{equation}\label{lerayAxAv:e}
E^{i,j}_2=H^i(A^\vee,R^j\pi_{2*}\mupn)\Rightarrow H^{i+j}(A\times_k A^\vee,\mupn).
\end{equation}

The morphism $A\times_k A\xrightarrow{\id_A\times \varphi_{\calL}} A\times_k A^\vee$ induces via pullback a morphism from (\ref{lerayAxAv:e}) to (\ref{lerayAxA:e}). This produces the following commutative diagram

\begin{equation*}\begin{tikzcd}
		&F^1H^2(A\times_k A^\vee, \mupn)\arrow[r]\arrow[d, "(\id_A\times \varphi_\calL)^*"] &H^1(A^\vee,A^\vee[p^n]) \arrow[d, "\varphi_\calL^*"]&  \\
	H^2(A, \mupn)\arrow[r,"h_1"] &
		F^1H^2(A\times_k A, \mupn)\arrow[r] &H^1(A,A^\vee[p^n])\arrow[r,"h_2"] &  \Hom(A[p^n],A^\vee[p^n])
	\end{tikzcd}
\end{equation*} 

where the composition of the lower horizontal arrows is $h$. If $\calP\in \Pic({A\times_k A^\vee})$ is the Poincaré bundle of $A$, we have that $\Lambda(\calL)=(\id_A\times \varphi_{\calL})^* \calP$. This implies that $h_1(c_1(\calL))=c_1(\Lambda(\calL))=(\id_A\times \varphi_{\calL})^* c_1(\calP)$. In addition, by direct inspection, we note that $h_2(\varphi_{\calL}^*([P]))=\varphi_\calL|_{A[p^n]}$, where $[P]\in H^1(A^\vee,A^\vee[p^n])$ is the class of the torsor $A^\vee \xrightarrow{\cdot p^n} A^\vee$. It remains to prove that the morphism $F^1H^2(A\times_k A^\vee,\mupn)\to H^1(A^\vee,A^\vee[p^n])$ sends $c_1(\calP)$ to $[P]$. For this purpose, we introduce the Leray spectral sequence
 \begin{equation}\label{lerayGm:e}
 	E^{i,j}_2=H^i(A^\vee,R^j\pi_{2*}\Gm[1])\Rightarrow H^{i+j}(A\times_k A^\vee,\Gm[1]).
 \end{equation}
 
 The morphism $\delta:\Gm[1]\to \mupn$ associated to the Kummer exact sequence induces a morphism from (\ref{lerayGm:e}) to (\ref{lerayAxAv:e}) which we denote with the same symbol. In turn, this produces the commutative diagram 
\begin{equation*}\begin{tikzcd}
		H^1(A\times_k A^\vee, \Gm)=F^1H^1(A\times_k A^\vee, \Gm)\arrow[r]\arrow[d, "c_1"] &H^0(A^\vee,A^\vee) \arrow[d, "\delta"].&\\
		F^1H^2(A\times_k A^\vee, \mupn)\arrow[r] &H^1(A^\vee,A^\vee[p^n])		
	\end{tikzcd}
\end{equation*} 

The upper horizontal arrow sends the line bundle $\calP$ to $\id_{A^\vee}\in H^0(A^\vee,A^\vee)$, while $\delta$ sends $\id_{A^\vee}$ to $[P]$. This yields the desired result.
\end{proof}

\begin{prop}\label{imag:p}
The morphism $h:H^2(A_\kbar,\Zp(1))\to \Hom_{}(A_\kbar[p^\infty],A^\vee_\kbar[p^\infty])$ is an injective morphism with image $\Hom^\sym_{}(A_\kbar[p^\infty],A^\vee_\kbar[p^\infty])$, the group of homomorphisms which are fixed by the involution $\tau\mapsto \tau^\vee$.
\end{prop}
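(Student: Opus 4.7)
The plan is to split the proof into two parts: (a) verify that $\myim(h)\subseteq \Hom^\sym(A_\kbar[p^\infty],A_\kbar^\vee[p^\infty])$, and (b) show that $h$ induces a bijection onto this subgroup. Both parts hinge on the commutativity of the multiplication $m\colon A\times A\to A$.

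For part (a), I would exploit the symmetry of the construction. Let $\sigma\colon A\times A\to A\times A$ be the swap of factors; since $m\circ\sigma = m$, the class $h_1(\alpha)=m^*\alpha-\pi_1^*\alpha-\pi_2^*\alpha$ satisfies $\sigma^*h_1(\alpha)=h_1(\alpha)$. Performing the construction of $h$ with the roles of $\pi_1$ and $\pi_2$ interchanged produces a second morphism $h'\colon H^2(A_\kbar,\Zp(1))\to\Hom(A_\kbar[p^\infty],A_\kbar^\vee[p^\infty])$. Using the Weil pairing identification $A^\vee[p^n]\cong \mathcal{H}om(A[p^n],\mu_{p^n})$, one checks that $h'(\alpha)=h(\alpha)^\vee$, the Cartier transpose: swapping $\pi_1\leftrightarrow \pi_2$ in the definitions of $\bar h_1$ and $h_2$ exchanges the source and target of the induced homomorphism via Cartier duality. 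The $\sigma$-invariance of $h_1(\alpha)$ then forces $h(\alpha)=h'(\alpha)=h(\alpha)^\vee$, so $h(\alpha)$ lies in $\Hom^\sym$.

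For part (b), I would combine Proposition \ref{comm-squa:p} with the Kummer exact sequence. Over $\kbar$, the group $\Pic^0(A_\kbar)=A^\vee(\kbar)$ is $p$-divisible, so $\Pic(A_\kbar)/p^n\iso \NS(A_\kbar)/p^n$. Proposition \ref{comm-squa:p} then identifies $h$ restricted to $\Pic(A_\kbar)^\wedge$ with the composition of the classical isomorphism $\NS(A_\kbar)_{\Zp}\iso \Hom^\sym(A_\kbar,A_\kbar^\vee)_{\Zp}$ and the embedding $\Hom^\sym(A_\kbar,A_\kbar^\vee)_{\Zp}\hookrightarrow \Hom^\sym(A_\kbar[p^\infty],A_\kbar^\vee[p^\infty])$ coming from the rigidity of $p$-divisible groups. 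Together with the Kummer exact sequence $0\to \NS(A_\kbar)_{\Zp}\to H^2(A_\kbar,\Zp(1))\to \Tp(\Br(A_\kbar))\to 0$, this reduces the assertion to showing that $h$ induces an isomorphism $\Tp(\Br(A_\kbar))\iso \Hom^\sym(A_\kbar[p^\infty],A_\kbar^\vee[p^\infty])/\Hom^\sym(A_\kbar,A_\kbar^\vee)_{\Zp}$.

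The main obstacle is this last identification, together with the Cartier-duality compatibility $h'=h^\vee$ used in step (a); both require careful bookkeeping of the various pairings. I would handle them simultaneously via biextension theory: biextensions of $(A,A)$ by $\mu_{p^n}$ are classified by $\Hom(A[p^n],A^\vee[p^n])$ (after Grothendieck--Breen), with symmetric biextensions corresponding to $\Hom^\sym$, and the Mumford-type class $\Lambda(\calL)=m^*\calL-\pi_1^*\calL-\pi_2^*\calL$ upgrades this correspondence to the morphism $h$ on the full group $H^2(A_\kbar,\mu_{p^n})$. Making this Grothendieck--Breen dictionary explicit in the fppf setting yields both the symmetry assertion and the desired bijection onto $\Hom^\sym$.
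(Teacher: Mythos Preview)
Your strategy is genuinely different from the paper's, and the gap lies in the last paragraph. The paper does not argue via biextensions or via the Kummer filtration at all: it passes to crystalline cohomology. By \cite[Rmq.~II.5.10]{Ill79} one has $H^2(A_\kbar,\Zp(1))=H^2_{\crys}(A_\kbar/W)^{F=p}$, and the cup product identifies $H^2_{\crys}(A_\kbar/W)$ with $\bigwedge^2 H^1_{\crys}(A_\kbar/W)$. Under this identification the map $\alpha\mapsto m^*\alpha-\pi_1^*\alpha-\pi_2^*\alpha$ becomes the tautological inclusion $\bigwedge^2 H^1_{\crys}\hookrightarrow H^1_{\crys}\otimes H^1_{\crys}$ inside $H^2_{\crys}(A_\kbar\times A_\kbar/W)$ via K\"unneth. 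Using the duality $H^1_{\crys}(A_\kbar/W)=H^1_{\crys}(A_\kbar^\vee/W)^\vee(-1)$ and contravariant Dieudonn\'e theory, the $F=p$ part of $H^1_{\crys}\otimes H^1_{\crys}$ is exactly $\Hom(A_\kbar[p^\infty],A_\kbar^\vee[p^\infty])$, and the wedge subspace is $\Hom^\sym$. So injectivity and the image computation are read off from linear algebra of $F$-crystals; no separate treatment of $\Tp(\Br)$ is needed.

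In your outline, part (a) is reasonable (the swap argument is exactly the shadow of the $\bigwedge^2\hookrightarrow\otimes^2$ picture), though the compatibility $h'=h^\vee$ deserves an actual verification. The real problem is part (b). Your ``reduction'' to showing that $h$ induces an isomorphism $\Tp(\Br(A_\kbar))\iso \Hom^\sym(A_\kbar[p^\infty],A_\kbar^\vee[p^\infty])/\Hom^\sym(A_\kbar,A_\kbar^\vee)_{\Zp}$ is not a reduction at all: given part (a) and Proposition~\ref{comm-squa:p}, this statement is \emph{equivalent} to the proposition. You then defer it to the Grothendieck--Breen dictionary, but that dictionary classifies $\mathrm{Biext}^1(A,A;\mu_{p^n})$, not $H^2(A,\mu_{p^n})$; turning the class $h_1(\alpha)\in H^2(A\times A,\mu_{p^n})$ into an honest biextension (with its two partial group laws) and showing that this produces a bijection from $H^2(A_\kbar,\mu_{p^n})$ onto symmetric biextensions is precisely the content you are trying to prove, and nothing in your sketch supplies it. In particular, nothing in your argument rules out the possibility that $h$ kills a nonzero class in $\Tp(\Br(A_\kbar))$. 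If you want to salvage the fppf-side approach, you must either prove that $\bar h_1$ is injective on $H^2(A_\kbar,\mu_{p^n})$ (which is false at finite level and only becomes true after passing to the limit and using extra structure) or else import an external identification of $H^2(A_\kbar,\Zp(1))$ with an exterior square---which is exactly what the crystalline comparison provides.
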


\begin{proof}Suppose $\mathrm{char}(k)=p$ and write $W$ for the ring of Witt vectors of $\kbar$. The crystalline cohomology groups of an abelian variety are torsion free by \cite[Cor. 2.5.5]{BBM82}. Therefore, thanks to the K\"unneth formula, \cite[Thm. V.4.2.1]{Ber74}, we have that $H^*_{\crys}(A_\kbar\times_\kbar A_\kbar/W)=H^*_{\crys}(A_\kbar/W)\otimes H^*_{\crys}(A_\kbar/W)$ so that $m:A\times_k A\to A$ induces a morphism $$m^*:H^*_{\crys}(A_\kbar/W)\to H^*_{\crys}(A_\kbar/W)\otimes H^*_{\crys}(A_\kbar/W).$$ 
In degree $2$ we get a morphism $$m^*:H^2_{\crys}(A_\kbar/W)\to H^2_{\crys}(A_\kbar/W)\oplus H^1_{\crys}(A_\kbar/W)^{\otimes 2} \oplus H^2_{\crys}(A_\kbar/W)$$ which in turn induces a morphism $$m^*-\pi_1^*-\pi_2^*: H^2_{\crys}(A_\kbar/W)\to H^1_{\crys}(A_\kbar/W)^{\otimes 2}.$$

Write $\sigma: \bigwedge^2 H^1_{\crys}(A_\kbar/W)\iso H^2_{\crys}(A_\kbar/W)$ for the natural isomorphism induced by the cup product, as in \cite[Cor. 2.5.5]{BBM82}. For every $v\in H^1_{\crys}(A_\kbar/W)$, the pullback $m^*(v)$ is equal to $\pi_1^*(v)+\pi_2^*(v)$. Therefore, the composition $(m^*-\pi_1^*-\pi_2^*)\circ \sigma: \bigwedge^2 H^1_{\crys}(A_\kbar/W)\hookrightarrow H^1_{\crys}(A_\kbar/W)^{\otimes 2}$ is equal to the natural embedding $v\wedge w\mapsto v\otimes w- w\otimes v$. By [\textit{ibid.}, Thm. 5.1.8], we have that the $F$-crystal $H^1_{\crys}(A_\kbar/W)$ over $\kbar$ is canonically isomorphic to $H^1_{\crys}(A_\kbar^\vee/W)^\vee$ with $F$-structure defined as the dual of the $F$-structure of $H^1_{\crys}(A_\kbar^\vee/W)$ multiplied by $p$. Thus, we have that $$(H^1_{\crys}(A_\kbar/W)^{\otimes 2})^{F=p}=\Hom_{\mathbf{F\textbf{-}Crys}(\kbar)}(H^1_{\crys}(A_\kbar^\vee/W),H^1_{\crys}(A_\kbar/W)),$$ where $\mathbf{F\textbf{-}Crys}(\kbar)$ is the category of $F$-crystals over $\kbar$. By \cite[Rmq. II.3.11.2]{Ill79}, the $F$-crystals $H^1_{\crys}(A_\kbar/W)$ and $H^1_{\crys}(A_\kbar^\vee/W)$ are the contravariant crystalline Dieudonné modules of the $p$-divisible groups $A_\kbar[p^\infty]$ and $A^\vee_\kbar[p^\infty]$, thus we get $$(H^1_{\crys}(A_\kbar/W)^{\otimes 2})^{F=p}=\Hom(A_\kbar[p^\infty],A^\vee_\kbar[p^\infty]).$$ On the other hand, by [\textit{ibid.}, Thm. II.5.14], there is a canonical isomorphism $H^2(A_\kbar,\Zp(1))=H^2_{\crys}(A_\kbar/W)^{F=p}$. This concludes the case when $\mathrm{char}(k)=p$. If $p$ is invertible in $k$ one can replace crystalline cohomology with $p$-adic étale cohomology.
\end{proof}
\section{Main results}\label{main-resu:s}

We are now ready to prove our main result, which is a flat version of the Tate conjecture for divisors of abelian varieties.

\begin{theo}\label{fppf-tate:t}
	If $A$ is an abelian variety over a finitely generated field $k$ of characteristic $p>0$, then $\Tp(\Br(A_\kbar)^k)=0$. Moreover, after possibly replacing $k$ with a finite separable extension, the cycle class map $$c_1:\NS(A)_{\Zp}\to\varprojlim_n H^2(A_\kbar,\mupn)^k$$ becomes an isomorphism.
\end{theo}
\begin{proof} 
	To prove the statement we may assume that $\Pic(A)\to \NS(A_{\kbar})$ is surjective by extending $k$. 
The $\Zp$-module $\Hom(A[p^\infty],A^\vee[p^\infty])$ embeds into $\Hom(A_\kbar[p^\infty],A^\vee_\kbar[p^\infty])$, therefore the morphism $$h:H^2(A,\Zp(1))\to \Hom(A[p^\infty],A^\vee[p^\infty])$$ induces a morphism $\tilde{h}:H^2(A_\kbar,\Z_p(1))^{k}\to \Hom(A[p^\infty],A^\vee[p^\infty])$. By Proposition \ref{imag:p}, we know that $\tilde{h}$ is injective and $\mathrm{im}(\tilde{h})$ is contained in $\Hom^\sym(A[p^\infty],A^\vee[p^\infty])$.
	In addition, by Proposition \ref{comm-squa:p}, we have the following commutative square	
	\begin{equation*}
		\begin{tikzcd}
			\NS(A)_{\Zp}\arrow[r,"c_1",hook]\arrow[d,hook] & H^2(A_\kbar,\Z_p(1))^{k}\arrow[d,"\tilde{h}",hook]\\
			\Hom(A,A^\vee)_{\Zp}\arrow[r] & \Hom(A[p^\infty],A^\vee[p^\infty]).
		\end{tikzcd}
	\end{equation*}
The lower arrow is an isomorphism by \cite[Thm. 2.6]{deJ98}, and since $\NS(A)=\Hom^\sym(A,A^\vee)$, we deduce that $\mathrm{im}(\tilde{h}\circ c_1)=\Hom^\sym(A[p^\infty],A^\vee[p^\infty])$. This implies that $$c_1:\NS(A)_\Zp\to H^2(A_\kbar,\Z_p(1))^{k}$$ is surjective, thus by Proposition \ref{tricky:p} we get that $$c_1:\NS(A)_\Qp\to \varprojlim_n H^2(A_\kbar,\mupn)^k[\tfrac{1}{p}]$$ is surjective. Combining this with Proposition \ref{Kummer-k-inv:p}, we deduce that $$c_1:\NS(A)_\Zp\to \varprojlim_n H^2(A_\kbar,\mupn)^k$$ is surjective. For the result about the Brauer group, we just note that by the previous argument and Proposition \ref{Kummer-k-inv:p}, the $\Zp$-module $\Tp(\Br(A_\kbar))^k$ vanishes. Therefore, thanks to Proposition \ref{tricky:p}, we deduce that $\Tp(\Br(A_\kbar)^k)=\Tp(\Br(A_\kbar)^k)[\tfrac{1}{p}]=0$.\end{proof}


\begin{theo}\label{fini-Brau:t}
Let $A$ be an abelian variety over a finitely generated field $k$ of characteristic $p>0$. The transcendental Brauer group $\Br(A_{\ks})^{k}$ is a direct sum of a finite group and a finite exponent $p$-group. In addition, if the Witt vector cohomology group $H^2(A_{\kbar},W\calO_{A_\kbar})$ is a finite $W(\kbar)$-module, then $\Br(A_{\ks})^{k}$ is finite.
\end{theo}
\begin{proof}By \cite[Thm. 16.2.3]{CS21}, the group $\Br(A_{\ks})^{k}[\tfrac{1}{p}]$ is finite. Moreover, thanks to Corollary \ref{Br-inj:c}, the morphism $\Br(A_{\ks})\to \Br(A_{\kbar})$ is injective, which implies that the transcendental Brauer group is the same as $\Br(A_{\kbar})^{k}$. Write $\mathbf{Ab}_p^\star\subseteq \mathbf{Ab}$ for the full subcategory of the category of abstract abelian groups with objects those (possibly infinite) $p$-groups isomorphic to $(\Qp/\Zp)^{\oplus a} \oplus M$ for some $a\geq 0$ and $M$ a finite exponent $p$-group. Equivalently, $\mathbf{Ab}_p^\star$ is the subcategory of those $p$-groups $M$ such that $M[p^{n+1}]/M[p^n]$ is finite for $n$ big enough. Note that this subcategory is closed under the operation of taking subobjects, quotients, and finite direct sums. We first want to prove that $H:=\varinjlim_n H^2(A_\kbar,\mupn)\in \mathbf{Ab}_p^\star$ and when $H^2(A_{\kbar},W\calO_{A_\kbar})$ is a finite $W(\kbar)$-module then, in addition, $H[p]$ is finite (so that $H[p^n]$ is also finite for every $n\geq 0$). Thanks to the Kummer exact sequence this implies the same result for $\Br(A_{\kbar})^{k}[p^\infty]$.
	
	\spa
	
	Write $q: A_\kbar \to \Spec(\kbar)$ for the structural morphism. By \cite[Cor. 1.4]{BO21}, for every $n$ there exists a commutative linear algebraic group $G_n$ representing $R^2q_{*}\mu_{p^n}$\footnote{One could use alternatively perfect groups rather than algebraic groups, as in \cite[Lem. 1.8]{Mil86}.}. Write $U_n$ for the unipotent radical of $G_n$ and $D_n$ for the reductive quotient $G_n/U_n$. Since $G_n$ is commutative, there is a canonical Levi decomposition $G_n=U_n\times D_n$. In particular, we have that $H=U\times D$, where $U:=\varinjlim_nU_n(\kbar)$ and $D:=\varinjlim_nD_n(\kbar)$. For every $n>0$, the group scheme $D_n$ is finite, because it is a reductive group killed by $p^n$. In addition, by \cite[Prop. 10.7]{BO21}, there is a canonical isomorphism of formal groups $\varinjlim_n \hat{G}_n=\varinjlim_n \hat{U}_n=\Phi^2_{\mathrm{fl}}(A_\kbar,\Gm)$, where $\hat{G}_n$ and $\hat{U}_n$ are the formal completions at the identity of $G_n$ and $U_n$ and $\Phi^2_{\mathrm{fl}}(-,-)$ is as in [\textit{ibid.},  §10.6].

	\spa

Applying $Rq_*$ to the exact sequence \begin{equation}\label{mupnmup:e}
1\to \mupn \to \mu_{p^{n+1}}\xrightarrow{\cdot p^n} \mu_p\to 1
	\end{equation} and using the fact that $\varinjlim_n R^1q_{*}\mu_{p^n}=\Pic_{A_{\kbar}/\kbar}[p^\infty]$ is a $p$-divisible group, we get the exact sequence $$1\to G_n\to G_{n+1}\xrightarrow{\cdot p^n} G_1.$$

As a first consequence, we deduce that for every $n>0$ the group scheme $D_n$ is the same as $D_{n+1}[p^n]$, thus $D[p]=D_1(\kbar)$ is finite. In particular, the abstract group $D$ is in $\mathbf{Ab}_p^\star$. To bound $U$, we note that by \cite[Prop. 3.1]{Mil86} the dimension of the chain of algebraic groups $G_1\subseteq G_2\subseteq \dots$ is eventually constant. Therefore, there exists $N>0$ such that for every $n\geq N$, the morphism $(U_n)_{\mathrm{red}}\to (U_{n+1})_{\mathrm{red}}$ is an isomorphism. This shows that $U$ is a finite exponent $p$-group. 

\spa

	 If $H^2(A_\kbar,W\calO_{A_\kbar})$ is a finite $W(\kbar)$-module, then the formal group $\Phi^2_{\mathrm{fl}}(A_\kbar,\Gm)$ does not contain any copy of $\hat{\mathbb{G}}_a$. Indeed, by \cite[Cor. 12.5]{BO21}, the group $H^2(A_\kbar,W\calO_{A_\kbar})$ is the Cartier module of $\Phi^2_{\mathrm{fl}}(A_\kbar,\Gm)$ and, by the assumption, it can not contain $\kbar[[V]]$, the Cartier module of $\hat{\mathbb{G}}_a$. Therefore, in this case, we have that each group $U_n(\kbar)$ is trivial, so that $H[p]=D[p]$ is finite.
	
	\spa
	
	 We can finally prove that $\Br(A_{\kbar})^{k}[p^\infty]$ has finite exponent. Suppose by contradiction that this is not the case. Since $\Br(A_{\kbar})^{k}[p^\infty]\in \mathbf{Ab}_p^\star$, we deduce that it contains a copy of $\Qp/\Zp$. On the other hand, by Theorem \ref{fppf-tate:t}, the group $\Tp(\Br(A_{\kbar})^{k})$ vanishes, which leads to a contradiction.
\end{proof}

\begin{coro}\label{galo-fixe:c}
The group $\Br(A_{\ks})^{\Gamma_k}$ has finite exponent.
\end{coro}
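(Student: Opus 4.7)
My plan is to bootstrap from Theorem \ref{fini-Brau:t}. Since there is a natural inclusion $\Br(A_{\ks})^k \subseteq \Br(A_{\ks})^{\Gamma_k}$ and the left-hand side has finite exponent by the theorem, it suffices to show that the quotient $Q := \Br(A_{\ks})^{\Gamma_k}/\Br(A_{\ks})^k$ has finite exponent.

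The natural tool is the Hochschild--Serre spectral sequence $E_2^{p,q} = H^p(\Gamma_k, H^q(A_{\ks}, \Gm)) \Rightarrow H^{p+q}(A, \Gm)$. Because $A$ has a $k$-rational point, the edge maps split, and the image of the edge map $\Br(A) \to \Br(A_{\ks})^{\Gamma_k}$ equals $\Br(A_{\ks})^k = E_\infty^{0,2}$. Hence $Q$ embeds into the image of the transgression $d_2: E_2^{0,2} \to E_2^{2,1} = H^2(\Gamma_k, \Pic(A_{\ks}))$, and it suffices to bound this image prime-by-prime.

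Using the exact sequence of $\Gamma_k$-modules $0 \to A^\vee(\ks) \to \Pic(A_{\ks}) \to \NS(A_{\ks}) \to 0$ I would split the problem into bounding $H^2(\Gamma_k, A^\vee(\ks))$ and $H^2(\Gamma_k, \NS(A_{\ks}))$. The $\NS$-summand is a finitely generated abelian group on which $\Gamma_k$ acts through a finite quotient; after replacing $k$ by a finite extension trivializing the action, its contribution is routine. For the $A^\vee(\ks)$-summand, the prime-to-$p$ part is handled by classical $\ell$-adic arguments (Tate's theorem for divisors on abelian varieties over finitely generated fields, together with the bounded $\ell$-cohomological dimension of $\Gamma_k$). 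For the $p$-primary part one exploits that $\Gamma_k$ has finite $p$-cohomological dimension when $k$ is finitely generated over $\Fp$, and that $A^\vee[p^\infty](\ks)$ is the étale part of the $p$-divisible group of $A^\vee$.

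The main obstacle I anticipate is the $p$-primary estimate, since the usual $\ell$-adic Tate-conjecture bootstrap is unavailable, and a priori $H^2(\Gamma_k, A^\vee(\ks))[p^\infty]$ could contain $p$-divisible contributions analogous to those in $\Tp(\Br(A_{\kbar}))^{\Gamma_k}$ flagged by Proposition \ref{i-counter:p}. However, the vanishing $\Tp(\Br(A_{\kbar})^k) = 0$ from Theorem \ref{fppf-tate:t}, combined with Proposition \ref{tricky:p} and the finiteness of the $p$-cohomological dimension of $\Gamma_k$, should rule out such $p$-divisible classes once intersected with the image of $d_2|_{\Br(A_{\ks})^{\Gamma_k}}$, closing the argument.
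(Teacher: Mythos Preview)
Your opening reduction is correct and is exactly how the paper (following \cite[Thm.~2.1]{CS13}) proceeds: once $\Br(A_{\ks})^{k}$ has finite exponent by Theorem~\ref{fini-Brau:t}, one embeds $Q=\Br(A_{\ks})^{\Gamma_k}/\Br(A_{\ks})^{k}$ into $H^2(\Gamma_k,\Pic(A_{\ks}))$ via the Hochschild--Serre differential $d_2$, the higher differentials landing in the bottom row being killed by the rational point.

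The gap is in the next step. You propose to control the two pieces $H^2(\Gamma_k,A^\vee(\ks))$ and $H^2(\Gamma_k,\NS(A_{\ks}))$ separately, but neither group has finite exponent. For the $\NS$ piece this is already visible after passing to a finite extension trivialising the action: with trivial coefficients $H^2(\Gamma_k,\Z)\cong H^1(\Gamma_k,\Q/\Z)=\Hom_{\mathrm{cont}}(\Gamma_k,\Q/\Z)$, and since $\Gamma_k$ surjects onto $\hZ$ this contains a copy of $\Q/\Z$. So the step you call ``routine'' fails. There is likewise no direct bound on $H^2(\Gamma_k,A^\vee(\ks))$, and the ingredients you invoke in the last paragraph ($\Tp(\Br(A_{\kbar})^k)=0$, Proposition~\ref{tricky:p}, finite $p$-cohomological dimension) have already been consumed in the proof of Theorem~\ref{fini-Brau:t}; they say nothing further about $Q$.

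What \cite{CS13} actually does --- and this is what the paper means by ``arguing as in the proof'' --- is not to bound the ambient $H^2$ but to constrain the \emph{image} of $d_2$ by functoriality, via the same curve trick used in Proposition~\ref{tricky:p}. One chooses a smooth projective curve $C/k$ with a rational point and a morphism $C\to A$ such that $\mathrm{Jac}(C)\twoheadrightarrow A$, hence $A^\vee\hookrightarrow\mathrm{Jac}(C)$. Since $\Br(C_{\ks})=0$, naturality of the spectral sequence for $C\to A$ forces $\mathrm{im}(d_2)$ into $\ker\bigl(H^2(\Gamma_k,\Pic(A_{\ks}))\to H^2(\Gamma_k,\Pic(C_{\ks}))\bigr)$, and the fact that $A^\vee$ is, up to isogeny, a direct factor of $\mathrm{Jac}(C)$ is what makes this kernel tractable. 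The argument is characteristic-free and requires no Tate-type input; in particular the $p$-primary part of $Q$ does not need a separate $p$-adic treatment.
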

\begin{proof}
This follows from Theorem \ref{fini-Brau:t} thanks to \cite[Thm. 5.4.12]{CS21}.
\end{proof}
We end this section with some examples of abelian varieties over finitely generated fields with infinite transcendental Brauer group. Let $E$ be a supersingular elliptic curve over an infinite finitely generated field $k$ and let $A$ be the product $E\times_k E$.

\begin{prop}\label{counterexample:p}
After possibly extending $k$ to a finite separable extension, the transcendental Brauer group $\Br(A_{\ks})^k$ becomes infinite.
\end{prop}

\begin{proof}
Even in this case we use that, thanks to Corollary \ref{Br-inj:c}, the transcendental Brauer group is the same as $\Br(A_{\kbar})^{k}$. Moreover, after extending the scalars we may assume that the morphism $\Pic(A)\to \NS(A_{\kbar})$ is surjective. Combining Proposition \ref{Kummer-k-inv:p} and the fact that $\NS(A_{\kbar})/p$ is finite we deduce that it is enough to show that $H^2(A_{\kbar},\mu_p)^k$ is infinite. We look at the Leray spectral sequence with respect to the second projection $\pi_2:A=E\times_k E\to E$ (both over $k$ and over $\kbar$). In the second page, we have that the boundary morphism $H^1(E,R^1\pi_{2*} \mu_p) \to H^3 (E,\pi_{2*}\mu_p)$ vanishes because $H^3 (E,\pi_{2*}\mu_p)\to H^3(A,\mu_p)$ admits a retraction induced by the zero section of $\pi_2$. Since $H^0(E_\kbar,R^2\pi_{2*}\mu_p)=H^2(E_\kbar,\mu_p)=\Z/p$, it is then enough to show that the image of $$H^1(E,E[p])=H^1(E,R^1\pi_{2*}\mu_p)\to H^1(E_\kbar,R^1\pi_{2*}\mu_p)=H^1(E_\kbar,E_\kbar[p])$$ is infinite. By Lemma \ref{HomH1:l}, we have that $\End(E[p])$ (resp. $\End(E_{\kbar}[p])$) admits a natural embedding in $H^1(E,E[p])$ (resp. $H^1(E_{\kbar},E_{\kbar}[p])$). Since $$k=\End(\alpha_p)\subseteq \End(E[p])\subseteq \End(E_{\kbar}[p])$$ by the assumption that $E$ is supersingular, we deduce the desired result.
\end{proof}

\section{Specialisation of Néron--Severi groups}
\label{spec:s}

\subsection{}\label{failure:ss}
We want to start this section with an explicative example. Let $\calE\to X$ be a non-isotrivial family of ordinary elliptic curves, where $X$ is a connected normal scheme of finite type over $\Fp$. Let $\calA$ be the fibred product $\calE\times_X \calE$. We denote by $E$ and $A$ the generic fibres over the generic point $\Spec(k)\hookrightarrow X$. The Kummer exact sequence induces the exact sequence 
	$$0\to\mathrm{NS}(A_{\bar{k}})_{\Z_p} \to H^2_{}(A_{\bar{k}},\Z_p(1))\to \Tp(\mathrm{Br}(A_{\bar{k}}))\to 0.$$

The group $\mathrm{NS}(A_{\bar{k}})_{\Z_p}$ is of rank $2+\rk_\Z(\End(E_{\bar{k}}))=3$, while, by Proposition \ref{imag:p}, the rank of $H^2_{}(A_{\bar{k}},\Z_p(1))$ is $2+\rk_\Zp(\End(E_\kbar[p^\infty]))=4$. This shows that $\Tp(\mathrm{Br}(A_{\bar{k}}))$ is of rank $1$. The endomorphisms of $E_{\kbar}[p^\infty]$ are all defined over $\ki$, which implies that the action of $\Gamma_k$ on $H^2_{}(A_{\bar{k}},\Z_p(1))$ is trivial. In particular, the morphism $\mathrm{NS}(A)_{\Z_p} \to H^2_{}(A_{\bar{k}},\Z_p(1))^{\Gamma_k}$ is not surjective and the cokernel $\Tp(\mathrm{Br}(A_{\bar{k}}))^{\Gamma_k}$ is isomorphic to $\Zp$.

\spa

In this case, the Galois action on $H^2_{}(A_{\bar{k}},\Z_p(1))$ is not enough to detect what classes are $\Zp$-linear combinations of algebraic cycles. There is an additional obstruction to descend cohomology classes through the purely inseparable extension $\ki/k$. This extra purely inseparable obstruction gives an explanation of the failure of surjectivity of specialisation morphisms of Néron--Severi groups. In the example, if $\Spec(\kappa)\hookrightarrow X$ is a closed point, we have that $\NS(\calA_{{\kappa}})=\NS(\calA_{\bar{\kappa}})$ is of rank $4$ because $\End(\calE_\kappa)=\End(\calE_{\bar\kappa})$ is of rank $2$ (there is an extra Frobenius endomorphism). Thus, the specialisation map $\NS(A)\hookrightarrow \NS(\calA_\kappa)$ is never surjective even if the rank of $H^2_{}(A_{\bar{\kappa}},\Z_p(1))$ is $4$ as the generic geometric fibre and the Galois action is trivial in both cases. One can interpret this failure by saying that the extra obstruction on $H^2_{}(A_{\bar{k}},\Z_p(1))$ coming from the purely inseparable extension $\ki/k$ is trivial on $H^2_{}(A_{\bar{\kappa}},\Z_p(1))$ since $\kappa$ is perfect. In general, we prove the following theorem.

\begin{theo}\label{NS:t}
	Let $X$ be a connected normal scheme of finite type over $\Fp$ with generic point $\eta=\Spec(k)$ and let $f:\calA\to X$ be an abelian scheme over $X$ with constant Newton polygon. For every closed point $x=\Spec(\kappa)$ of $X$ we have $$\rk_\Z(\NS(\calA_{\bar{x}})^{\Gamma_\kappa})-\rk_\Z(\NS(\calA_{\bar{\eta}})^{\Gamma_k})\geq \rk_\Zp ( \Tp(\Br(\calA_{\bar{\eta}}))^{\Gamma_k}).$$
\end{theo}

\begin{rema}
Note that after replacing $X$ with a finite étale cover the action of $\Gamma_k$ on $\NS(\calA_{\bar{\eta}})$ is trivial. Thus we also get an inequality before taking Galois-fixed points.
\end{rema}
To prove Theorem \ref{NS:t} we first need the following result.

\begin{prop}\label{lisse-sheaf:p}
	Under the assumptions of Theorem \ref{NS:t}, the functor $\calF$ which sends $T\in (X^{\mathrm{perf}})_{\mathrm{pro}\et}$\footnote{We denote by $(-)_{\mathrm{pro}\et}$ the pro-étale site of a scheme, as defined in \cite{BS15}.} to $\Hom^{\sym}(\calA_T[p^\infty],\calA^\vee_T[p^\infty])[\tfrac{1}{p}]$ is a semi-simple finite rank $\Qp$-local system such that for every $\bar{x}\in X(\bar{\mathbb{F}}_p)$ we have $$\calF_{\bar{x}}=\Hom^{\sym}(\calA_{\bar{x}}[p^\infty],\calA^\vee_{\bar{x}}[p^\infty])[\tfrac{1}{p}].$$
\end{prop}
\begin{proof} Let $\widetilde{\calF}$ be the functor which sends $T\in (X^{\mathrm{perf}})_{\mathrm{pro}\et}$ to $\Hom(\calA_T[p^\infty],\calA^\vee_T[p^\infty])[\tfrac{1}{p}]$. We first note that to prove the result we can replace $\calF$ with $\widetilde{\calF}$ since $\calF$ is the kernel of the $\Qp$-linear endomorphism $\alpha-\id_{\widetilde{\calF}}:\widetilde{\calF}\to \widetilde{\calF}$ where $\alpha$ sends $\tau\in \Hom(\calA_T[p^\infty],\calA^\vee_T[p^\infty])[\tfrac{1}{p}]$ to $\tau^\vee$.
	Write $\Fcrys(X)$ for the category of $F$-crystals over the absolute crystalline site of $X$ and let $\calM_1,\calM_2\in \Fcrys(X)$ be the contravariant crystalline Dieudonné modules of $\calA[p^\infty]$ and $\calA^\vee[p^\infty]$ over $X$ constructed in \cite[Déf. 3.3.6]{BBM82}. By [\textit{ibid.}, Thm. 5.1.8], we have that $\calM_1= \calM_2^\vee (-1)$ where $\calM_2^\vee(-1)$ is the $F$-crystal $\HHom(\calM_2,\calO_{X,\mathrm{cris}})$ endowed with the dual of the $F$-structure of $\calM_2$ multiplied by $p$. Thus $\calM_1^{\otimes 2}$ is equal to $\HHom(\calM_2,\calM_1)$ endowed with the natural $F$-structure multiplied by $p$. By \cite[Thm. D]{Lau13}, for every perfect scheme $T\to X$ we have canonical isomorphisms $$\Gamma(T,\calM_1^{\otimes 2})^{F=p}=\Hom_{\Fcrys(T)}(\calM_{2,T},\calM_{1,T})=\Hom(\calA_T[p^\infty],\calA^\vee_T[p^\infty])$$ where $\calM_{1,T}$ and $\calM_{2,T}$ are the inverse images of $\calM_1$ and $\calM_2$ to $T$. These isomorphisms are equivariant with respect to the action of the abstract group $\Aut(T/X)$.

	 \spa
	 
	 By \cite[Thm. 2.5.1]{Katz79}, the slope filtration of the $F$-crystal  $\calM_{1,X^{\textrm{perf}}}^{\otimes 2}$ (which exists since $\calA\to X$ has constant Newton polygon) splits uniquely up to isogeny. We denote by $\calN^{[1]}_{X^\textrm{perf}}$ the slope $1$ subobject of $\calM_{1,X^{\textrm{perf}}}^{\otimes 2}$, defined up to isogeny. Note that for every $T\to X^\textrm{perf}$ we have that $$\Gamma(T,\calM_{1,T}^{\otimes 2})^{F=p}=\Gamma(T,\calN^{[1]}_T)^{F=p}=\Gamma(T,\calN^{[1]}_{T}(1))^{F=1}.$$
	 
	 By construction, the $F$-crystal $\calN^{[1]}_{X^{\textrm{perf}}}(1)$ is unit-root. Therefore, by \cite[Prop. 4.1.1]{Kat73}, we deduce that $\widetilde{\calF}$ is a $\Qp$-local system. In addition, by [\textit{ibid.}, Lem. 4.3.15], for every $S=\Spec(R)\to X$ with $R$ strictly henselian perfect ring we have $$\Hom(\calA_{S}[p^\infty],\calA^\vee_{S}[p^\infty])[\tfrac{1}{p}]=\Hom(\calA_{s}[p^\infty],\calA^\vee_{s}[p^\infty])[\tfrac{1}{p}]$$ where $s$ is the closed point of $S$. This implies that for every $\bar{x}\in X(\bar{\mathbb{F}}_p)$ we have $$\calF_{\bar{x}}=\Hom^{\sym}(\calA_{\bar{x}}[p^\infty],\calA^\vee_{\bar{x}}[p^\infty])[\tfrac{1}{p}].$$



	\spa
	
	For the semi-simplicity, since $X$ is normal, we can shrink $X$ and assume it smooth. Write $\calN$ for the $F$-isocrystal $(R^1f_{\crys*}\calO_{\calA,\crys})^{\otimes 2}$ and $\calN^{[1]}$ for the quotient $\calN^{\leq 1}/\calN^{<1}$, where $\calN^{\leq 1}$ (resp. $\calN^{<1}$) is the subobject of $\calN$ of slopes $\leq 1$ (resp. $<1$). Note that by \cite[Thm. 2.5.6.(ii)]{BBM82}, the pullback of $\calN^{[1]}$ to $X^{\mathrm{perf}}$ is isomorphic as an $F$-isocrystal with $\calN^{[1]}_{X^\textrm{perf}}$ over $X^{\textrm{perf}}$ defined above. Thanks to \cite[Thm. 1.1.2]{D'Ad20}, we have that $\calN^{[1]}$ is semi-simple as an $F$-isocrystal.

	\spa

	By \cite[Thm. 2.1]{Cre87}, there is an equivalence between unit-root $F$-isocrystals over $X$ and finite rank $\Qp$-local systems. By construction, Crew's and Katz's correspondences are compatible, in the sense that they agree after pulling back the objects through $X^\mathrm{perf}\to X$. Since the étale fundamental groups of $X$ and $X^{\mathrm{perf}}$ are canonically isomorphic, we deduce that $\widetilde{\calF}$ is semi-simple as well. This yields the desired result.

\end{proof}

\subsection{}\textit{Proof of Theorem \ref{NS:t}. }We look at the exact sequence
$$0\to\NS(A_{\kbar})_{\Qp}\to H^2(A_{\kbar},\Qp(1))\to \Tp(\Br(A_{\kbar}))[\tfrac{1}{p}]\to 0.$$

Thanks to Proposition \ref{lisse-sheaf:p}, the operation of taking Galois-fixed points is exact. We get the exact sequence
$$0\to\NS(A_{\kbar})_{\Qp}^{\Gamma_k}\to H^2(A_{\kbar},\Qp(1))^{\Gamma_k}\to \Tp(\Br(A_{\kbar}))[\tfrac{1}{p}]^{\Gamma_k}\to 0.$$
Looking at the ranks we deduce the following equality
\begin{equation}\label{eq:e}
\rk_\Zp(H^2(A_{\kbar},\Zp(1))^{\Gamma_k})=\rk_\Z(\NS(A_{\kbar})^{\Gamma_k})+\rk_\Zp(\Tp(\Br(A_{\kbar}))^{\Gamma_k}).
\end{equation}
By Proposition \ref{lisse-sheaf:p}, the action of $\Gamma_k$ on $H^2(A_{\kbar},\Qp(1))=\Hom^\sym(A_\kbar[p^\infty],A^\vee_\kbar[p^\infty])[\tfrac{1}{p}]$ factors through the étale fundamental group of $X$ associated to $\bar{\eta}$, denoted by $\pi_1^{\et}(X,\bar{\eta})$. In addition, if $\kappa$ is the residue field of $x$, the inclusion $x\hookrightarrow X$ induces then an action of $\Gamma_\kappa$ on $H^2(A_{\kbar},\Qp(1))$ which corresponds, up to conjugation, to the action of $\Gamma_\kappa$ on $H^2(A_{\bar{\kappa}},\Qp(1))$. Therefore, by the Tate conjecture over finite fields (or Corollary \ref{galo-fixe:c}), we get $\NS(\calA_{\bar{x}})^{\Gamma_\kappa}_\Qp=H^2(A_{\kbar},\Qp(1))^{\Gamma_\kappa}$. Since $H^2(A_{\kbar},\Qp(1))^{\Gamma_k}$ is a subspace of $H^2(A_{\kbar},\Qp(1))^{\Gamma_\kappa}$ we deduce that
\begin{equation}\label{ineq:e}
	\rk_\Z(\NS(\calA_{\bar{x}})^{\Gamma_\kappa})=\rk_\Zp(H^2(A_{\kbar},\Zp(1))^{\Gamma_\kappa})\geq \rk_\Zp(H^2(A_{\kbar},\Zp(1))^{\Gamma_k}).
\end{equation}

Combining (\ref{eq:e}) and (\ref{ineq:e}) we get the desired result.\qed

\spa

We want to conclude this section with other examples of abelian varieties such that $ \Tp(\Br(A_{\kbar}))^{\Gamma_k}\neq 0$. These are variants of the abelian surface of §\ref{failure:ss} and they all provide counterexamples to the conjecture in \cite[§7.3.1]{Ulm14} when $\ell=p$. 
\begin{prop}\label{counter:p}
	Let $A$ be an abelian variety which splits as a product $B\times_k B$ with $B$ an abelian variety over $k$. There is a natural exact sequence $$0\to \Hom(B,B^\vee)_\Zp\to \Hom(B_\kbar[p^\infty],B_\kbar^\vee[p^\infty])^{\Gamma_k}\to \Tp(\Br(A_{\kbar}))^{\Gamma_k}.$$

\end{prop}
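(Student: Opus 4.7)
The strategy is to restrict a Kummer-derived left-exact sequence to a direct summand singled out by the product decomposition $A = B \times B$. First I would use the Kummer sequence over $A_\kbar$: since $\Pic^0_{A/k} = A^\vee$ is an abelian variety and $\kbar$ is algebraically closed, $\Pic^0(A_\kbar)$ is $p$-divisible, so $\Pic(A_\kbar)^\wedge = \NS(A_\kbar)_{\Zp}$ and one obtains a short exact sequence of $\Gamma_k$-modules
$$0 \to \NS(A_\kbar)_{\Zp} \to H^2(A_\kbar,\Zp(1)) \to \Tp(\Br(A_\kbar)) \to 0.$$
Applying the left-exact functor $(-)^{\Gamma_k}$ yields
$$0 \to \NS(A_\kbar)_{\Zp}^{\Gamma_k} \to H^2(A_\kbar,\Zp(1))^{\Gamma_k} \to \Tp(\Br(A_\kbar))^{\Gamma_k}.$$

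Next I would exploit the product structure to split each of the two left-hand terms into ``diagonal'' and ``off-diagonal'' summands. By Proposition~\ref{imag:p}, $H^2(A_\kbar,\Zp(1)) \cong \Hom^{\sym}(A_\kbar[p^\infty],A_\kbar^\vee[p^\infty])$. Writing homomorphisms $B_\kbar[p^\infty]^{\oplus 2} \to B_\kbar^\vee[p^\infty]^{\oplus 2}$ as $2\times 2$ matrices with entries in $\Hom(B_\kbar[p^\infty],B_\kbar^\vee[p^\infty])$, the involution $\tau\mapsto \tau^\vee$ acts as transpose-dual, so the symmetric part splits $\Gamma_k$-equivariantly as
$$\Hom^{\sym}(B_\kbar[p^\infty],B_\kbar^\vee[p^\infty])^{\oplus 2} \oplus \Hom(B_\kbar[p^\infty],B_\kbar^\vee[p^\infty]),$$
the last summand being the upper-right entry (with lower-left determined as its dual). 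I would then invoke the classical decomposition $\NS(B\times B) \cong \NS(B)^{\oplus 2}\oplus \Hom(B,B^\vee)$, where the off-diagonal summand is generated by pullbacks $(\id,\varphi)^{*}\calP$ of the Poincaré bundle. Taking $\Gamma_k$-invariants of the off-diagonal $\NS$-summand gives $\Hom(B,B^\vee)_{\Zp}$, since morphisms of abelian varieties are defined over $\ks$ and $\Hom_{\ks}(B_{\ks},B_{\ks}^\vee)^{\Gamma_k} = \Hom_k(B,B^\vee)$.

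Finally, I would restrict the displayed left-exact sequence to the off-diagonal summands on both sides, which are $\Gamma_k$-stable direct factors matched under the cycle class map. This yields the desired
$$0 \to \Hom(B,B^\vee)_{\Zp} \to \Hom(B_\kbar[p^\infty],B_\kbar^\vee[p^\infty])^{\Gamma_k} \to \Tp(\Br(A_\kbar))^{\Gamma_k}.$$
The main technical point will be verifying the compatibility of the two direct-sum decompositions (of $\NS$ and of $H^2$) under the cycle class map; this reduces to a functorial tracing of the construction of $h$ in Section~\ref{cons-morp:s} applied to the Poincaré bundle, and is essentially a reprise of Proposition~\ref{comm-squa:p}.
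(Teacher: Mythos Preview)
Your proposal is correct and follows essentially the same approach as the paper: both start from the Kummer-derived left-exact sequence $0\to \NS(A_{\kbar})_{\Zp}^{\Gamma_k}\to H^2(A_{\kbar},\Zp(1))^{\Gamma_k}\to \Tp(\Br(A_{\kbar}))^{\Gamma_k}$, identify the middle term via Proposition~\ref{imag:p}, and then restrict to the off-diagonal K\"unneth summand picked out by $A=B\times B$. The paper's proof is terser (it merely says the preimage of the summand $\Hom(B_\kbar[p^\infty],B_\kbar^\vee[p^\infty])^{\Gamma_k}$ in $\NS$ is $\Hom(B,B^\vee)_{\Zp}$), while you spell out the matrix decomposition and the compatibility check via Proposition~\ref{comm-squa:p}, but there is no substantive difference.
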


\begin{proof}
	We consider the exact sequence $$0\to \NS(A_{\kbar})^{\Gamma_k}_\Zp\to H^2(A_{\kbar}, \Zp(1))^{\Gamma_k}\to \Tp(\Br(A_{\kbar}))^{\Gamma_k}.$$ Arguing as in the proof of Proposition \ref{imag:p}, the $\Zp$-module $\Hom(B_\kbar[p^\infty],B_\kbar^\vee[p^\infty])^{\Gamma_k}$ is naturally a direct summand of $H^2(A_{\kbar}, \Zp(1))^{\Gamma_k}$. Its preimage in $\NS(A_{\kbar})_\Zp^{\Gamma_k}$ corresponds to the $\Zp$-module $$\Hom(B_\kbar,B^\vee_\kbar)_\Zp^{\Gamma_k}=\Hom(B,B^\vee)_\Zp.$$ This concludes the proof.
\end{proof}

\begin{coro}\label{count:c}
	If $\End(B)=\Z$, then $\Tp(\Br(A_{\kbar}))^{\Gamma_k}\neq 0$.
\end{coro}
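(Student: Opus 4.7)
My plan is to invoke Proposition \ref{counter:p}, which reduces the task to proving that the natural inclusion
\[
\Hom(B, B^\vee)_{\Zp} \hookrightarrow \Hom(B_\kbar[p^\infty], B_\kbar^\vee[p^\infty])^{\Gamma_k}
\]
is not an equality, for then the exact sequence there forces $\Tp(\Br(A_\kbar))^{\Gamma_k}$ to contain a nonzero element. Under the hypothesis $\End(B) = \Z$, the left hand side has $\Zp$-rank exactly $1$: fixing any polarization $\lambda_0 : B \to B^\vee$, the identification $\Hom(B,B^\vee) \otimes \Q = (\End(B) \otimes \Q) \cdot \lambda_0 = \Q \cdot \lambda_0$ shows that $\Hom(B, B^\vee)$ is commensurable with $\Z \cdot \lambda_0$. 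It therefore suffices to produce a second $\Qp$-linearly independent $\Gamma_k$-fixed element of $\Hom(B_\kbar[p^\infty], B_\kbar^\vee[p^\infty])$.

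The key point is that, although $\End(B)$ is as small as possible, the $p$-divisible group $B_\kbar[p^\infty]$ carries additional intrinsic structure over the perfect field $\kbar$ which need not descend to $k$. Concretely, $B_\kbar[p^\infty]$ admits a canonical slope decomposition up to isogeny $B_\kbar[p^\infty] \sim \bigoplus_{\mu} G_\mu$, with each $G_\mu$ isotypic of slope $\mu$. Because this decomposition is intrinsic to the $p$-divisible group, it is $\Gamma_k$-equivariant, and the associated idempotent projectors $e_\mu$ in $\End(B_\kbar[p^\infty]) \otimes \Qp$ are $\Gamma_k$-fixed. Provided $B_\kbar[p^\infty]$ has at least two distinct slopes—which is automatic whenever the $p$-rank of $B$ is positive, via the canonical connected–étale splitting over $\kbar$—the composite $\lambda_0 \circ e_\mu$ lies in $(\Hom(B_\kbar[p^\infty], B_\kbar^\vee[p^\infty]) \otimes \Qp)^{\Gamma_k}$, is not a $\Qp$-multiple of $\lambda_0$ (since it annihilates the other slope components, while $\lambda_0$ is an isogeny), and so supplies the required extra element.

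The main obstacle is the purely supersingular case, where $B_\kbar[p^\infty]$ is isotypic of slope $1/2$ and the slope argument above degenerates. In that situation $\End(B_\kbar[p^\infty]) \otimes \Qp$ is a central simple algebra of dimension $4g^2$ over $\Qp$ which coincides with $\End(B_\kbar) \otimes \Qp$, and combining Galois descent $\End(B_\kbar)^{\Gamma_k} = \End(B) = \Z$ with the fact that $\Gamma_k$ acts on the finite-rank lattice $\End(B_\kbar)$ through a finite quotient would force the $\Gamma_k$-fixed subspace to be merely $\Qp \cdot \lambda_0$. I expect this case to be vacuous, because a purely supersingular abelian variety descends to a finite field and thereby acquires a nontrivial Frobenius endomorphism, so the hypothesis $\End(B) = \Z$ cannot be satisfied over a finitely generated $k$; I would verify this as the final step.
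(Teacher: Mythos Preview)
Your proposal is correct and follows essentially the same route as the paper: both reduce via Proposition \ref{counter:p} to showing that $\Hom(B_\kbar[p^\infty],B_\kbar^\vee[p^\infty])^{\Gamma_k}$ has $\Zp$-rank at least $2$, supply the extra Galois-fixed elements from the slope decomposition (the paper phrases this as an embedding $\End(\calG_1)[\tfrac{1}{p}]\oplus\End(\calG_2)[\tfrac{1}{p}]\hookrightarrow \End(B_{\ki}[p^\infty])[\tfrac{1}{p}]$ over the purely inseparable closure, which is equivalent to your Galois-fixed idempotents composed with $\lambda_0$), and dispose of the single-slope case by observing that $\End(B)=\Z$ rules out supersingularity. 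The paper likewise asserts this last implication without a detailed argument, so your flagged ``final step'' is at the same level of rigor as the published proof.
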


\begin{proof}
By the assumption, $\Hom(B,B^\vee)_\Zp$ is a $\Zp$-module of rank $1$. Therefore, by Proposition \ref{counter:p}, it is enough to prove that the rank of $\Hom(B_\kbar[p^\infty],B_\kbar^\vee[p^\infty])^{\Gamma_k}$ is greater than $1$. Since $\End(B)=\Z$, the abelian variety $B$ is not supersingular, so that the $p$-divisible group $B[p^\infty]$ admits at least two slopes. By the Dieudonné--Manin classification, this implies that $B_{\ki}[p^\infty]$ is isogenous to a direct sum $\calG_1\oplus \calG_2$ of non-zero $p$-divisible groups over $\ki$. Since $\End(\calG_1)[\tfrac{1}{p}]\oplus \End(\calG_2)[\tfrac{1}{p}]$ embeds into $\End(B_{\ki}[p^\infty])[\tfrac{1}{p}]\simeq \Hom(B_\kbar[p^\infty],B_\kbar^\vee[p^\infty])[\tfrac{1}{p}]^{\Gamma_k}$, we deduce that $\rk_\Zp(\Hom(B_\kbar[p^\infty],B_\kbar^\vee[p^\infty])^{\Gamma_k})>1$, as we wanted.
\end{proof}


\bibliographystyle{ams-alpha}

\end{document}